\numberwithin{equation}{section}\newtheorem{theorem}{Theorem}[section]
\newtheorem{corollary}[theorem]{Corollary}\newtheorem{lemma}[theorem]{Lemma}
\theoremstyle{remark}
\newtheorem{remark}{Remark}[section]
\theoremstyle{definition}
\newcommand{\bra}[1]{\langle #1 \rangle}
\newcommand{\p}{\widetilde{p}}
\newcommand{\q}{\widetilde{q}}
\newcommand{\s}{\widetilde{s}}
\newcommand{\R}{\widetilde{r}}
\title[Angular integrability]
{Stein-Weiss and Caffarelli-Kohn-Nirenberg\\ 
inequalities with angular integrability}
\date{\today}    %%% ''\date{}'' to omit date
\author{Piero D'Ancona}
\address{Piero D'Ancona: SAPIENZA - Unversit\`a di Roma,
Dipartimento di Matematica, Piazzale A.~Moro 2, I-00185 Roma, Italy}
\email{dancona@mat.uniroma1.it}
\author{Renato Luca'}
\address{Renato Luca': SAPIENZA - Unversit\`a di Roma,
Dipartimento di Matematica, Piazzale A.~Moro 2, I-00185 Roma, Italy}
\email{luca@mat.uniroma1.it}
\subjclass[2000]{%
}\keywords{}
\begin{document}\maketitle
\begin{abstract}
  We prove an extension of the Stein-Weiss weighted estimates
  for fractional integrals, in the context of $L^{p}$
  spaces with different integrability properties in the
  radial and the angular direction. 
  In this way, the classical estimates can be unified with their
  improved radial versions. A number of consequences
  are obtained: in particular we deduce precised
  versions of weighted Sobolev embeddings,
  Caffarelli-Kohn-Nirenberg estimates, and Strichartz
  estimates for the wave equation, which extend the radial
  improvements to the case of arbitrary functions.
\end{abstract}

\section{Introduction}

The radial estimate of Walter Strauss
\cite{Strauss77-a} states that for radial functions
$u\in \dot H^{1}(\mathbb{R}^{n})$, $n\ge2$, one has
\begin{equation}\label{eq:strauss}
  % |x|^{\frac{n-1}{2}}|u(x)|\le 
  % C\|\nabla u\|_{L^{2}}^{\frac12}\|u\|_{L^{2}}^{\frac12}.
  |x|^{\frac{n-1}{2}}|u(x)|\le 
  C\|\nabla u\|_{L^{2}},\qquad|x|\ge1.
\end{equation}
This is an example of a well known general phenomenon:
under suitable assumptions of symmetry, notably
radial symmetry, classical estimates and embeddings of spaces
admit substantial improvements. In the case of \eqref{eq:strauss},
a control on the $H^{1}$ norm of $u$ gives a pointwise
bound and decay of $u$, which are false in the general case.
Radial and more general symmetric estimates have been extensively
investigated, in view of their relevance for applications,
especially to differential equations.

This phenomenon is quite natural; indeed, symmetric functions
can be regarded as functions defined on lower dimensional manifolds,
hence satisfying stronger estimates, extended by the action of
some group of symmetries. Radial functions are essentially functions
on $\mathbb{R}^{+}$, while the norms on $\mathbb{R}^{n}$ introduce
suitable dimensional weights connected to the volume form.

In view of the gap between the symmetric and the non symmetric
case, an
interesting question arises: is it possible to quantify the
defect of simmetry of functions and prove more general
estimates which encompass all cases, and in particular
reduce to radial estimates when applied to radial functions?
Heuristically, one should be
able to improve on the general case by introducing some
measure of the distance from the maximizers of the
inequality, which typically have the greatest symmetry.

The aim of this paper is to give a partial 
positive answer to this question, through the use of
the following type of mixed radial-angular norms:
\begin{equation*}
  \|f\|_{L^{p}_{|x|}L^{\p}_{\theta}}=
  \left(
    \int_{0}^{+\infty}
    \|f(\rho\ \cdot\ )\|^{p}_{L^{\p}(\mathbb{S}^{n-1})}
    \rho^{n-1}d \rho
  \right)^{\frac1p},\qquad
  \|f\|_{L^{\infty}_{|x|}L^{\p}_{\theta}}=
  \sup_{\rho>0}\|f(\rho\ \cdot\ )\|_{L^{\p}(\mathbb{S}^{n-1})}.
\end{equation*}
When the context is clear we shall write simply $L^{p}L^{\p}$.
For $p=\p$ the norms reduce to the usual $L^{p}$ norms
\begin{equation*}
  \|u\|_{L^{p}_{|x|}L^{p}_{\theta}}\equiv
  \|u\|_{L^{p}(\mathbb{R}^{n})},
\end{equation*}
while for radial functions the value of $\p$ is irrelevant:
\begin{equation*}
  \text{$u$ radial}\quad\implies\quad 
  \|u\|_{L^{p}L^{\p}}\simeq \|u\|_{L^{p}(\mathbb{R}^{n})}
  \quad \forall p,\p\in[1,\infty].
\end{equation*}
Notice also that the norms are increasing in $\p$.
The idea of distinguishing radial and angular directions 
is not new and
has proved successful in the context of
Strichartz estimates and dispersive equations
(see \cite{MachiharaNakamuraNakanishi05-a},
\cite{Sterbenz05-a},
\cite{DanconaCacciafesta11-a}; see also
\cite{ChoOzawa09-a}). To give a flavour of the results which can
be obtained, Strauss' estimate \eqref{eq:strauss} can be extended
as follows:
\begin{equation*}%\label{eq:partsob}
  |x|^{\frac np-\sigma}|u(x)|\lesssim
  \||D|^{\sigma}u\|_{L^{p}L^{\p}},\qquad
  \frac {n-1}\p+\frac1p<\sigma <\frac np
\end{equation*}
for arbitrary non radial functions $u$ and all $1<p<\infty$,
$1\le\p\le \infty$ (see Subsection \ref{sub:sobolev} below
for details and more general results).

A central role in our approach will be
played by the fractional integrals
$$
(T_{\gamma}\phi)(x)=\int_{\mathbb{R}^{n}}
  \frac{\phi(y)}{|x-y|^{\gamma}}dy, \qquad 0<\gamma<n.
$$
Weighted $L^{p}$ estimates for $T_{\gamma}$
are a fundamental problem of harmonic analysis, with a wide
range of applications. Starting from the
classical one dimensional case studied by Hardy and Littlewood,
an exhaustive analysis has been made of the admissible classes of
weights and ranges of indices
(see \cite{Stein93-a} and the references therein).
In the special case of power weights the optimal result is
due to Stein and Weiss:

\begin{theorem}[\cite{SteinWeiss58-b}]\label{SteinWeissThm}
Let $n\geq 1$ and $1< p\le q<\infty$.
Assume $\alpha,\beta,\gamma$ satisfy the set of conditions
($1=1/p+1/p'$)
\begin{equation}\label{eq:condSW}
\begin{split}
  &\beta<\frac nq,\quad \alpha<\frac{n}{p'},\quad 0<\gamma<n
    \\
  &\alpha+\beta+\gamma=n+\frac nq-\frac np
    \\
  &\alpha+\beta\ge0.
\end{split}
\end{equation}
Then the following inequality holds 
\begin{equation}\label{eq:stw}
  \||x|^{-\beta}T_{\gamma}\phi\|_{L^{q}}\le 
  C(\alpha,\beta,p,q)\cdot
  \||x|^{\alpha}\phi\|_{L^{p}}.
\end{equation}
\end{theorem}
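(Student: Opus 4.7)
The plan is to establish \eqref{eq:stw} by passing to the dual bilinear form. By duality, writing $g = |x|^{-\beta}f$ as the test function in $L^{q'}$, inequality \eqref{eq:stw} is equivalent to
\begin{equation*}
  I(f,\phi) := \int_{\mathbb{R}^n}\int_{\mathbb{R}^n}
  \frac{f(x)\,\phi(y)}{|x|^{\beta}\,|x-y|^{\gamma}\,|y|^{\alpha}}
  \,dx\,dy \;\lesssim\; \|f\|_{L^{q'}}\,\|\phi\|_{L^{p}}
\end{equation*}
for nonnegative $f,\phi$, which is the form I would work with. I would then split the domain $\mathbb{R}^n \times \mathbb{R}^n$ into three regions: $R_1 = \{|x| < |y|/2\}$, $R_2 = \{|y|/2 \le |x| \le 2|y|\}$, $R_3 = \{|x| > 2|y|\}$, giving $I = I_1+I_2+I_3$.

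On $R_1$ we have $|x-y|\simeq|y|$, so that the kernel essentially factorizes and
\begin{equation*}
  I_1 \lesssim \int \frac{\phi(y)}{|y|^{\gamma+\alpha}}
  \left(\int_{|x|\le |y|/2}\frac{f(x)}{|x|^{\beta}}\,dx\right)dy.
\end{equation*}
The inner integral is controlled by a Hardy-type inequality: after passing to spherical coordinates and introducing the radial distribution function, the condition $\beta < n/q$ (equivalently $\beta < n - n/q'$, using the scaling equality) ensures local integrability of the weight, and the global bound in $\|f\|_{L^{q'}}\|\phi\|_{L^p}$ follows from the one-dimensional Hardy inequality on $(0,\infty)$ with exponents dictated by the scaling identity $\alpha+\beta+\gamma=n+n/q-n/p$. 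The treatment of $I_3$ is symmetric, using instead the condition $\alpha<n/p'$.

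For the diagonal piece $I_2$, I would decompose both factors into dyadic annuli $A_k=\{2^{k}\le|x|<2^{k+1}\}$; since $|x|\simeq|y|\simeq 2^{k}$ on the relevant blocks, the weights become constants $\simeq 2^{-k(\alpha+\beta)}$ and one is reduced to
\begin{equation*}
  I_2 \lesssim \sum_{k\in\mathbb{Z}} 2^{-k(\alpha+\beta)}
  \int_{A_{k}}\!\int_{\widetilde A_{k}}\frac{f(x)\phi(y)}{|x-y|^{\gamma}}dx\,dy,
\end{equation*}
where $\widetilde A_k$ is the union of $A_{k-1},A_{k},A_{k+1}$. On each scale I would apply the classical unweighted Hardy--Littlewood--Sobolev inequality to the inner double integral; the scaling exponent matches the power $2^{-k(\alpha+\beta)}$ precisely by \eqref{eq:condSW}. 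The key to summability over $k$ is to combine H\"older in $k$ (or Cauchy--Schwarz, using $p\le q$) with the hypothesis $\alpha+\beta\ge 0$, which keeps the geometric series convergent in the correct direction.

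The main obstacle I anticipate is in the diagonal region $I_2$ at the borderline $\alpha+\beta=0$: there no geometric decay is available across scales and one must instead reassemble the dyadic estimates via Minkowski's or Young's inequality in the scale variable $k$, using that $p\le q$. A second technical point, which I would handle carefully rather than dismiss, is verifying that the strict inequalities $\alpha<n/p'$ and $\beta<n/q$ are sharp in guaranteeing the convergence of the Hardy-type estimates for $I_1$ and $I_3$, as failure at the endpoints signals logarithmic divergences.
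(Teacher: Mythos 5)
Your outline is a correct and classical route to Stein--Weiss, but it is not the route the paper takes. In fact the paper does not prove Theorem~\ref{SteinWeissThm} directly at all: it cites it and then recovers it (for $n\ge2$) as the special case $p=\p$, $q=\q$ of the more general Theorem~\ref{the:Our1Thm}. The paper's argument for Theorem~\ref{the:Our1Thm} is structurally different from yours: it writes $T_{\gamma}\phi$ in polar coordinates, interprets the angular integral as a convolution on $SO(n)$ and applies Young's inequality there, estimates the resulting spherical singular integral $I_{\nu}(\rho e)$ explicitly (Lemma~\ref{lem:singint}), and finally recognizes the radial integral as a convolution on the multiplicative group $(\mathbb{R}^{+},\cdot)$ with Haar measure $d\rho/\rho$, to which it applies the weak (Marcinkiewicz) Young inequality precisely to absorb the borderline case $\alpha+\beta=(n-1)(1/q-1/p+1/\p-1/\q)$. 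Your decomposition is instead into the spatial regions $|x|\ll|y|$, $|x|\gg|y|$ and the diagonal $|x|\simeq|y|$, with one-dimensional Hardy inequalities off the diagonal and a dyadic-annulus plus rescaled HLS (or Young) argument on the diagonal. Your route is more elementary and self-contained, but it would not easily yield the mixed $L^{p}_{|x|}L^{\p}_{\theta}$ refinement that is the paper's main object; the $SO(n)$-then-$d\rho/\rho$ factorization is exactly what lets the paper separate the angular and radial integrability exponents.

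Two small points worth tightening in your sketch. First, on the diagonal block you cannot apply HLS with exponent $\gamma$ and pairing $(q',p)$ directly, because HLS requires the exact scaling relation $\gamma=n+\frac nq-\frac np$, i.e.\ $\alpha+\beta=0$; when $\alpha+\beta>0$ you must first peel off the excess decay, $|x-y|^{-\gamma}\le (C2^{k})^{\alpha+\beta}|x-y|^{-(n+\frac nq-\frac np)}$ on $A_{k}\times\widetilde A_{k}$, and then apply HLS with the scale-invariant exponent, and in the case $p=q$ (where $n+\frac nq-\frac np=n$ falls outside the HLS range) you should use Young's convolution inequality with the truncated kernel $|z|^{-\gamma}\one{|z|\lesssim 2^{k}}\in L^{1}$ instead. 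Second, after the scales cancel what you actually need for $\sum_{k}\|f_{k}\|_{L^{q'}}\|\phi_{k}\|_{L^{p}}$ is H\"older in $k$ with exponents $(q',q)$ followed by the embedding $\ell^{p}\hookrightarrow\ell^{q}$ coming from $p\le q$; it is this nesting of sequence spaces, not a ``geometric series,'' that makes the sum converge. (Also, $\beta<n/q\iff\beta<n-n/q'$ is just $1/q+1/q'=1$, not a consequence of the scaling identity.) With those corrections the argument goes through.
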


Conditions in
the first line of \eqref{eq:condSW} are necessary to ensure
integrability,
while the necessity of 
the condition on the second line is due to scaling.
On the other hand, the sharpness of
$\alpha+\beta\ge0$ is less obvious and follows from the results
of \cite{SawyerWheeden92-a}.

In the radial case the last condition
can be relaxed and $\alpha+\beta$ is allowed to assume negative
values. Radial improvements were noticed in
\cite{Vilela01-a}, 
\cite{HidanoKurokawa08-a}, and the sharp result was obtained
recently by De Napoli, Dreichman and Dur\'an:

\begin{theorem}[\cite{DenapoliDrelichmanDuran09-a}]\label{DeNapoli1Thm}
  Let $n,p,q,\alpha,\beta,\gamma$ be as in the statement
  of Theorem \eqref{SteinWeissThm} but with the condition
  $\alpha+\beta\ge0$ relaxed to
  \begin{equation}\label{eq:condDDD}
    \alpha+\beta\ge(n-1)\left(\frac1q-\frac1p\right).
  \end{equation}
  Then estimate \eqref{eq:stw} is valid for all
  radial functions $\phi=\phi(|x|)$.
\end{theorem}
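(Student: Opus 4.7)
The natural plan is to use the radial symmetry to collapse \eqref{eq:stw} to a one-dimensional weighted inequality on $(0,\infty)$, and then apply classical one-dimensional Hardy and Stein-Weiss-type estimates. Write $\phi(x)=\tilde\phi(|x|)$; since $T_\gamma\phi$ is also radial, integrating out the angles in the $y$ variable gives
\[
(T_\gamma\phi)(\rho e)=\int_0^\infty K_\gamma(\rho,r)\,\tilde\phi(r)\,r^{n-1}\,dr,
\qquad
K_\gamma(\rho,r)=\int_{\mathbb{S}^{n-1}}\frac{d\sigma(\omega)}{(\rho^2+r^2-2\rho r\,\omega\cdot e)^{\gamma/2}}.
\]
Thus \eqref{eq:stw} is equivalent to an $L^p(r^{n-1+\alpha p}\,dr)\to L^q(\rho^{n-1-\beta q}\,d\rho)$ bound for the integral operator on $(0,\infty)$ with kernel $K_\gamma(\rho,r)\,r^{n-1}$, and the whole problem reduces to a sharp pointwise estimate on $K_\gamma$ followed by a one-dimensional argument.

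The heart of the proof is the pointwise estimate on $K_\gamma$. Using the substitution $t=\omega\cdot e$ and then $u=\rho^2+r^2-2\rho r t$, the sphere integral rewrites as
\[
K_\gamma(\rho,r)\simeq(\rho r)^{-(n-2)}\int_{(\rho-r)^2}^{(\rho+r)^2}\bigl[(u-(\rho-r)^2)((\rho+r)^2-u)\bigr]^{(n-3)/2}u^{-\gamma/2}\,du,
\]
from which one reads off three regimes: in the off-diagonal region $r\le\rho/2$ or $r\ge 2\rho$ one has $K_\gamma\lesssim\max(\rho,r)^{-\gamma}$; in the diagonal region $r\sim\rho$ the angular cancellation yields $K_\gamma\lesssim\rho^{-\gamma}$ for $\gamma<n-1$, a logarithmic correction for $\gamma=n-1$, and $K_\gamma\lesssim(\rho r)^{(1-n)/2}|\rho-r|^{-(\gamma-(n-1))}$ for $n-1<\gamma<n$. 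The essential point is that the angular integration lowers the effective order of the singularity on the diagonal from $\gamma$ to $\gamma-(n-1)$, and it is precisely this dimensional drop that powers the relaxation \eqref{eq:condDDD}.

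Split $T_\gamma\phi$ into three pieces along these regimes. The two off-diagonal pieces $\rho^{-\gamma}\int_0^{\rho/2}\tilde\phi(r)r^{n-1}\,dr$ and $\int_{2\rho}^\infty\tilde\phi(r)r^{n-1-\gamma}\,dr$ are one-dimensional Hardy operators, and the two-weight Hardy (Muckenhoupt) inequality bounds them in the required weighted Lebesgue spaces under only the first-line conditions of \eqref{eq:condSW} together with the scaling identity, with no constraint on the sign of $\alpha+\beta$. The diagonal piece, after renormalizing by the factor $(\rho r)^{(1-n)/2}$ and absorbing the remaining powers into the weights, becomes a one-dimensional weighted Riesz potential of order $1-(\gamma-(n-1))$ acting on $(0,\infty)$, and its boundedness reduces to the classical one-dimensional Stein-Weiss inequality whose positivity condition, via the shifted exponents $\alpha_1=\alpha+(n-1)(1/p-1/2)$ and $\beta_1=\beta+(n-1)(1/2-1/q)$, translates into exactly $\alpha+\beta\ge (n-1)(1/q-1/p)$, i.e.\ \eqref{eq:condDDD}.

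The main obstacle is the fine kernel estimate in the diagonal regime for $\gamma\ge n-1$: one must extract the angular cancellation quantitatively from the explicit formula above, and then verify carefully that the shifted exponents in the subsequent 1D Stein-Weiss step meet its pointwise admissibility requirements — possibly requiring a further dyadic or interpolation refinement so that they are implied by the first-line conditions of \eqref{eq:condSW} rather than imposing stricter ones. For $\gamma<n-1$ the kernel is already pointwise bounded by $\max(\rho,r)^{-\gamma}$ globally, the argument collapses to pure Hardy estimates, and \eqref{eq:condDDD} holds automatically.
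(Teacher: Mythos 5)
Your proposal is correct in substance, but it takes a different route from the paper. In the paper, Theorem \ref{DeNapoli1Thm} is quoted from \cite{DenapoliDrelichmanDuran09-a} and is recovered as the radial special case of Theorem \ref{the:Our1Thm}, whose proof runs through Young's inequality on $SO(n)$ (trivial in the radial case) followed by the weak Young inequality for convolution on the multiplicative group $(\mathbb{R}^{+},\cdot)$ with Haar measure $d\rho/\rho$; there the Hardy-type and diagonal contributions are packaged into a single weak-$L^{r}(d\rho/\rho)$ norm of the kernel factor, verified by the same three-region analysis you carry out by hand. Your argument instead follows the original strategy of \cite{DenapoliDrelichmanDuran09-a}: your spherical average $K_\gamma$ is exactly the quantity estimated in Lemma \ref{lem:singint} (by homogeneity $K_\gamma(\rho,r)=r^{-\gamma}I_\gamma((\rho/r)e)$), your three regimes match \eqref{eq:stima0}--\eqref{stimaI}, and the splitting into two Hardy pieces plus a diagonal fractional-integration piece is the hands-on counterpart of the paper's weak Young step. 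What your route buys is elementarity (two-weight Hardy plus one-dimensional fractional integration); what the paper's route buys is that the same computation yields the full mixed-norm Theorem \ref{the:Our1Thm}, of which the radial statement is a corollary.

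One caveat concerns the diagonal step, which is precisely where \eqref{eq:condDDD} is used (for $\gamma<n-1$ the scaling relation already forces $\alpha+\beta>(n-1)(1/q-1/p)$, as you observe): a literal appeal to the one-dimensional Stein--Weiss inequality with the shifted exponents $\alpha_{1},\beta_{1}$ does not go through, because its own integrability conditions $\alpha_{1}<1/p'$, $\beta_{1}<1/q$ are not implied by \eqref{eq:condSW} (for instance $\beta$ close to $n/q$ gives $\beta_{1}>1/q$). The refinement you anticipate is the right one and should be made explicit: since this piece is supported on $\rho\sim r$, decompose dyadically $\rho\sim r\sim 2^{j}$, freeze the power weights on each block (their total exponent is $-(\alpha_{1}+\beta_{1})\le 0$), apply on each block the unweighted one-dimensional Hardy--Littlewood--Sobolev inequality in the equality case of \eqref{eq:condDDD} (where $\gamma-(n-1)=1-1/p+1/q$ and necessarily $p<q$, since $p=q$ with equality would force $\gamma=n$) or Young's inequality in the strict case, and then sum over $j$ using the finite overlap of the blocks and $p\le q$. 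With that localization spelled out, your proof is complete.
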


Using the $L^{p}_{|x|}L^{\p}_{\theta}$ norms we are able prove
the following general result which extends both theorems:

\begin{theorem}\label{the:Our1Thm}
  Let $n \geq 2$ and $1<p\le q<\infty$, $1\le\p\le\q\le\infty$. Assume
  $\alpha,\beta,\gamma$ satisfy the set of conditions
  \begin{equation}\label{eq:cDL}
  \begin{split}
    &\beta<\frac nq,\quad \alpha<\frac{n}{p'},\quad 0<\gamma<n
      \\
    &\alpha+\beta+\gamma=n+\frac nq-\frac np
      \\
    &\alpha+\beta\ge(n-1)
      \left(\frac1q-\frac1p+\frac{1}{\p}-\frac{1}{\q}\right).
  \end{split}
  \end{equation}
  Then the following estimate holds: 
  \begin{equation}\label{oHLS}
    \||x|^{-\beta}T_{\gamma} \phi\|
        _{L^{q}_{|x|}L^{\q }_{\theta}} 
    \le C
    \| |x|^{\alpha} \phi\|_{L^{p}_{|x|}L^{\p }_{\theta}}.
  \end{equation}
  The range of admissible $p,q$ indices 
  can be relaxed to $1\le p\le q\le \infty$ in two cases:
  \begin{enumerate}\setlength{\itemindent}{-0pt}
  \renewcommand{\labelenumi}{\textit{(\roman{enumi})}}
    \item when the third inequality in \eqref{eq:cDL} is strict, or
    \item when the Fourier transform $\widehat{\phi}$ has support
    contained in an annulus $c_{1}R\le|\xi|\le c_{2} R$
    ($c_{2}\ge c_{1}>0$, $R>0$);
    in this case \eqref{oHLS} holds with a constant independent 
    of $R$.
  \end{enumerate}
\end{theorem}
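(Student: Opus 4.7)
My plan is to reduce the estimate to two more tractable pieces via a dyadic/diagonal splitting, in the spirit of the classical Stein-Weiss argument and of the De Napoli-Drelichman-Dur\'an proof. Write $T_\gamma\phi=T^{\mathrm{loc}}\phi+T^{\mathrm{far}}\phi$, with $T^{\mathrm{loc}}$ restricted to $|y|\in[|x|/2,2|x|]$ and $T^{\mathrm{far}}$ to its complement. The far piece has a kernel essentially independent of $\sigma=x/|x|$, so it collapses to a one-dimensional inequality; the local piece, after dyadic decomposition and rescaling, reduces to a single statement on the unit annulus.

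On the far region, $|x-y|\simeq\max(|x|,|y|)$ makes the kernel radial in $y$ and constant in $\sigma$. Setting $g(\rho)=\|\phi(\rho\cdot)\|_{L^{\p}(\mathbb{S}^{n-1})}$ and dominating $\|\phi(\rho\cdot)\|_{L^1(\mathbb{S}^{n-1})}$ by $g(\rho)$ via H\"older, the bound collapses to the pair of weighted Hardy inequalities
\begin{equation*}
  \Bigl\| r^{-\beta-\gamma}\!\!\int_0^{r/2}\!\!g(\rho)\rho^{n-1}d\rho\Bigr\|_{L^q(r^{n-1}dr)} + \Bigl\|r^{-\beta}\!\!\int_{2r}^\infty\!\!\rho^{n-1-\gamma}g(\rho)d\rho\Bigr\|_{L^q(r^{n-1}dr)}\lesssim \|\rho^\alpha g\|_{L^p(\rho^{n-1}d\rho)},
\end{equation*}
whose admissibility range is precisely the De Napoli-Drelichman-Dur\'an condition $\alpha+\beta\ge(n-1)(1/q-1/p)$, automatically implied by the hypothesis since $1/\p\ge1/\q$.

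On the local region, write $\phi=\sum_k\phi_k$ with $\phi_k$ supported on $|y|\sim 2^k$; then $T^{\mathrm{loc}}\phi_k$ is essentially supported on $|x|\sim 2^k$. By scaling of $T_\gamma$ together with the identity $\alpha+\beta+\gamma=n+n/q-n/p$, the weighted bound on each shell follows from the unweighted local mixed-norm HLS
\begin{equation*}
  \|T_\gamma\psi\|_{L^qL^{\q}}\lesssim \|\psi\|_{L^pL^{\p}},\qquad \mathrm{supp}\,\psi\subset\{|y|\sim 1\},
\end{equation*}
after which the dyadic contributions sum via $\ell^p\hookrightarrow \ell^q$ (valid since $p\le q$). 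This local lemma is the heart of the proof: on the unit annulus $|r\sigma-\rho\omega|^2\simeq(r-\rho)^2+|\sigma-\omega|^2$, and the operator decomposes as a composition of a spherical fractional integral (bounded $L^{\p}(\mathbb{S}^{n-1})\to L^{\q}(\mathbb{S}^{n-1})$ on the sphere viewed as an $(n-1)$-dimensional space) with a one-dimensional radial fractional integral. The angular deficit $(1/\p-1/\q)$ in the third condition of the theorem enters precisely because the spherical HLS consumes that much of the available Sobolev exponent; an alternative route is via the spherical-harmonic decomposition, making $T_\gamma$ a Hankel-type multiplier on each harmonic subspace to which sharp $k$-uniform bounds may be applied.

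The main obstacle is this local mixed HLS, since extracting the sharp $(1/\p-1/\q)$ dependence demands a delicate interplay between the radial and spherical integrations, and sub-cases (such as $\gamma$ close to $0$ or to $n-1$) may require iteration or separate treatment. For the extended ranges, case \textit{(i)} follows by real interpolation from the non-endpoint case: the strict inequality opens an open neighborhood of admissible indices, so the endpoints $p=1$, $q=\infty$ are reached by interpolating between two interior points of the region. Case \textit{(ii)} exploits the Fourier support hypothesis through Littlewood-Paley localization and Bernstein-type inequalities, which effectively tame the singularity of $T_\gamma$ and provide endpoint integrability with constants uniform in $R$.
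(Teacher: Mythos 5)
Your architecture (near/far splitting, dyadic rescaling of the near part to a single unweighted estimate on the unit annulus) is a legitimate alternative to the paper's route, and the far part and the dyadic summation via $\ell^p\hookrightarrow\ell^q$ are fine. But the proposal has a genuine gap at exactly the point where the theorem is hard: the ``local mixed HLS'' $\|T_\gamma\psi\|_{L^qL^{\q}}\lesssim\|\psi\|_{L^pL^{\p}}$ for $\psi$ supported in the unit annulus, which encodes the sharp angular deficit $(n-1)(1/\p-1/\q)$, is asserted, not proved — you yourself call it the ``main obstacle.'' The suggested factorization into a spherical fractional integral composed with a one-dimensional radial one does not come for free, since $\bigl((r-\rho)^2+|\sigma-\omega|^2\bigr)^{-\gamma/2}$ does not factor into a function of $r-\rho$ times a function of $|\sigma-\omega|$; and the delicate case is precisely \emph{equality} in the third condition of \eqref{eq:cDL}, an endpoint where naive strong-type Young/Schur arguments fail. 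The paper resolves this by first computing sharply the spherical integral $I_\nu(x)=\int_{\mathbb{S}^{n-1}}|x-y|^{-\nu}dS(y)$ (Lemma \ref{lem:singint}), applying Young's inequality for convolution on $SO(n)$ to peel off the angular exponents, and then applying the \emph{weak} Young (Marcinkiewicz) inequality on the multiplicative group $(\mathbb{R}^+,d\rho/\rho)$, since the resulting radial kernel $|\rho-1|^{\frac{n-1}{\R}-\gamma}$ lies only in $L^{r,\infty}$ at equality; this is also what forces $1<p<q<\infty$ there, with $p=q$ handled separately because scaling then gives $\alpha+\beta>0$. Nothing in your sketch substitutes for this step.

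Two further points would fail as written. For case \textit{(i)} you claim the endpoints $p=1$, $q=\infty$ follow ``by real interpolation from the non-endpoint case'': interpolation between interior points can only produce interior points, so this cannot reach $p=1$ or $q=\infty$; the correct mechanism (used in the paper) is that strict inequality in the third condition upgrades the radial kernel from $L^{r,\infty}$ to $L^{r}$, so the \emph{strong} Young inequality applies, and that one is valid for the full range of exponents in $[1,\infty]$. For case \textit{(ii)}, invoking Littlewood--Paley and Bernstein is only a gesture: the paper's argument writes $\phi=\psi*\phi$ with $\widehat\psi$ a bump adapted to the annulus, so $T_\gamma\phi=(T_\gamma\psi)*\phi$ with $T_\gamma\psi$ Schwartz, reducing to the pointwise bound $|T_\gamma\phi|\lesssim\bra{x}^{-\mu}*|\phi|$ for arbitrary $\mu$, and then a separate, nontrivial weighted mixed-norm estimate for the kernel $\bra{x}^{-\mu}$ (Lemmas \ref{lem:singint2} and \ref{lem:xmu}) is needed to get the full range $1\le p\le q\le\infty$ with constants independent of $R$; your sketch contains no analogue of that lemma.
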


\begin{remark}
Notice that:
\begin{enumerate}\setlength{\itemindent}{-0pt}
\renewcommand{\labelenumi}{(\alph{enumi})}
  \item with the choices $q=\q $ and $p=\p $ (i.e.~in the usual 
  $L^{p}$ norms)
  Theorem \ref{the:Our1Thm} reduces to Theorem \ref{SteinWeissThm};
  \item if $\phi$ is radially symmetric, with the choice $\q =\p $,
  Theorem \ref{the:Our1Thm} reduces to Theorem \ref{DeNapoli1Thm}.
  Indeed, if $\phi$ is radially symmetric then
  $T_{\gamma}\phi$ is radially symmetric too, so that all choices
  for $\q,\p $ are equivalent;
  \item obviously, the same estimate is true for general 
  operators $T_{F}$ with nonradial kernels $F(x)$ satisfying
  \begin{equation*}
    T_{F}\phi(x)=\int F(x-y) \phi(y)dy,
    \qquad |F|\le C|x|^{-\gamma}.
  \end{equation*}
\end{enumerate}
\end{remark}

The proof of Theorem \ref{the:Our1Thm} is based on
two successive applications of Young's inequality 
for convolutions on suitable
Lie groups: first we use the strong inequality on the rotation
group $SO(n)$; then we use a Young inequality in the radial
variable, which in some cases must be replaced by 
the weak Young-Marcinkewicz
inequality on the multiplicative group $(\mathbb{R}^{+},\cdot)$
with the Haar measure $d\rho/\rho$. The convenient idea of using
convolution in the measure $d\rho/\rho$ was introduced in
\cite{DenapoliDrelichmanDuran09-a}.

\begin{remark}\label{rem:nonhomogeneous}
  The operator $T_{\gamma}$ is a convolution with the homogenous
  kernel $|x|^{-\gamma}$. Consider instead
  the convolution with a nonhomogeneous kernel
  \begin{equation*}
    S_{\gamma}\phi(x)=\int \frac{\phi(y)}{\bra{x-y}^{\gamma}}dy.
  \end{equation*}
  By the obvious pointwise bound
  \begin{equation*}
    |S_{\gamma}\phi(x)|\le T_{\gamma}|\phi|(x)
  \end{equation*}
  it is clear that $S_{\gamma}$ satisfies the same estimates as
  $T_{\gamma}$. However the scaling invariance of the estimate is
  broken, and indeed something more can be proved, thanks to the
  smoothness of the kernel (see Lemma \ref{lem:xmu}):
\end{remark}

\begin{corollary}\label{cor:nonhom}
  Let $n \geq 2$ and $1\le p\le q\le\infty$, 
  $1\le\p\le\q\le\infty$. Assume
  $\alpha,\beta,\gamma$ satisfy the set of conditions
  \begin{equation}\label{eq:condDL}
    \beta<\frac nq,\qquad \alpha<\frac{n}{p'},\qquad
    \alpha+\beta\ge(n-1)
      \left(\frac1q-\frac1p+\frac{1}{\p}-\frac{1}{\q}\right),
  \end{equation}
  \begin{equation}\label{eq:condabg}
    \alpha+\beta+\gamma>n\left(1+\frac1q-\frac1p\right).
  \end{equation}
  Then the following estimate holds: 
  \begin{equation}\label{ourHLS}
    \||x|^{-\beta}S_{\gamma} \phi\|
        _{L^{q}_{|x|}L^{\q }_{\theta}} 
    \le C
    \| |x|^{\alpha} \phi\|_{L^{p}_{|x|}L^{\p }_{\theta}}.
  \end{equation}
\end{corollary}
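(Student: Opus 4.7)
The plan is to reduce the non-homogeneous estimate to the homogeneous Stein--Weiss-type estimate of Theorem \ref{the:Our1Thm} by dominating the kernel pointwise. The key observation is that for every $\tilde\gamma\in[0,\gamma]$ and every $z\in\mathbb{R}^n$,
\begin{equation*}
  \bra{z}^{-\gamma}\le|z|^{-\tilde\gamma}:
\end{equation*}
when $|z|\ge 1$ this follows from $\bra{z}\ge|z|$ and $\gamma\ge\tilde\gamma$, while when $|z|\le 1$ it follows from $\bra{z}\ge 1$ together with $|z|^{-\tilde\gamma}\ge 1$. Applied with $z=x-y$, this yields the pointwise domination $|S_\gamma\phi(x)|\le T_{\tilde\gamma}|\phi|(x)$.

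I would then choose
\begin{equation*}
  \tilde\gamma:=n\Bigl(1+\tfrac{1}{q}-\tfrac{1}{p}\Bigr)-\alpha-\beta,
\end{equation*}
so that $(\alpha,\beta,\tilde\gamma)$ satisfies the scaling equality in \eqref{eq:cDL}. The strict bounds $\alpha<n/p'$ and $\beta<n/q$ add up to $\alpha+\beta<n(1+1/q-1/p)$, hence $\tilde\gamma>0$; hypothesis \eqref{eq:condabg} gives $\tilde\gamma<\gamma$, which legitimises the pointwise bound; and a short algebraic check using the angular condition in \eqref{eq:condDL} together with $p\le q$ and $\p\le\q$ shows $\tilde\gamma\le n$, with strict inequality except exactly when $p=q$, $\p=\q$ and $\alpha+\beta=0$ simultaneously. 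Away from this single degenerate configuration, $(\alpha,\beta,\tilde\gamma)$ fulfils all hypotheses of Theorem \ref{the:Our1Thm} (the angular inequality is inherited verbatim from the corollary's hypothesis), so applying that theorem to $T_{\tilde\gamma}|\phi|$ and combining with the pointwise bound produces \eqref{ourHLS}.

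The main obstacle is the degenerate case $\tilde\gamma=n$, forced precisely by $p=q$, $\p=\q$, $\alpha+\beta=0$. In this situation \eqref{eq:condabg} reads $\gamma>n$, so $\bra\cdot^{-\gamma}\in L^1(\mathbb{R}^n)$; I would obtain the bound directly by a Minkowski/Young argument on $L^p_{|x|}L^{\p}_\theta$, using the rotation invariance of the kernel under $SO(n)$ exactly as in the proof of Theorem \ref{the:Our1Thm}, and absorbing the weight via the cancellation $\beta=-\alpha$. The extreme values $p=1$ or $q=\infty$ permitted by the corollary are covered by case \textit{(i)} of Theorem \ref{the:Our1Thm} whenever the angular condition is strict, and in the remaining borderline cases by a small perturbation of $\tilde\gamma$ (consistent with the strict inequality $\tilde\gamma<\gamma$) that turns the angular inequality strict before Theorem \ref{the:Our1Thm} is invoked. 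I expect Lemma \ref{lem:xmu}, to which Remark \ref{rem:nonhomogeneous} alludes, to consolidate the pointwise domination and cover all subcases uniformly.
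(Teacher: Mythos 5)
Your primary argument takes a genuinely different route from the paper. You dominate $\bra{z}^{-\gamma}\le|z|^{-\tilde\gamma}$ with $\tilde\gamma=n(1+1/q-1/p)-\alpha-\beta$ chosen to restore the scaling identity, and then invoke Theorem~\ref{the:Our1Thm}; the paper instead applies Lemma~\ref{lem:xmu} directly with $\mu=\gamma$, whose hypotheses \eqref{eq:condabmu}--\eqref{eq:condmu} are verbatim \eqref{eq:condDL}--\eqref{eq:condabg}, and whose proof is an independent region-by-region analysis of the kernel $\bra{\cdot}^{-\mu}$ that never passes through $T_{\tilde\gamma}$. Your reduction is clean and covers the bulk of the range, but it leaves real gaps at the edges that your sketch does not resolve.

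Concretely: when $p=1$ or $q=\infty$ \emph{and} the angular inequality in \eqref{eq:condDL} is an equality, Theorem~\ref{the:Our1Thm} is unavailable — its main range requires $1<p\le q<\infty$ and case \textit{(i)} requires strict angular inequality. Your proposed ``small perturbation of $\tilde\gamma$'' cannot help here: $\tilde\gamma$ is pinned by the scaling identity once $\alpha,\beta,p,q$ are fixed, and $\alpha,\beta$ cannot be perturbed because they appear in the weights of the estimate to be proved; increasing $\tilde\gamma$ (within $\tilde\gamma<\gamma$) would break scaling, and compensating by lowering $\alpha+\beta$ would turn the angular condition the wrong way. Second, in the degenerate configuration $p=q$, $\p=\q$, $\alpha+\beta=0$ (so $\tilde\gamma=n$), the bound you want is $\||x|^{\alpha}(\bra{\cdot}^{-\gamma}\ast\phi)\|_{L^pL^\p}\lesssim\||x|^{\alpha}\phi\|_{L^pL^\p}$ with $\gamma>n$ and a nontrivial weight $|x|^\alpha$, $-n/p<\alpha<n/p'$; a bare Minkowski/Young argument with $\bra{\cdot}^{-\gamma}\in L^1$ does not carry the weight through, and the standard fix (domination by the maximal function plus $A_p$ theory) would itself need to be adapted to the mixed $L^pL^\p$ norm — you assert this step rather than prove it. Your closing reference to Lemma~\ref{lem:xmu} is the right instinct: that lemma \emph{is} the paper's proof, and under the stated hypotheses it yields the corollary in one stroke for the full range $1\le p\le q\le\infty$. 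It should be promoted from a remark to the argument; as written, your reduction to $T_{\tilde\gamma}$ leaves the endpoint and degenerate cases open.
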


From the basic 
Theorem \ref{the:Our1Thm} a large number of inequalities 
can be deduced, which extend several
important classical estimates; we list a few  examples in the 
following.

\subsection{Weighted Sobolev embeddings}\label{sub:sobolev}

Recalling the pointwise bound
\begin{equation}\label{eq:derivT}
  |u(x)|\le C T_{\lambda}(||D|^{n-\lambda}u|),\qquad
  0<\lambda<n
\end{equation}
where $|D|^{\sigma}=(-\Delta)^{\frac s2}$, we see that an immediate
consequence of \eqref{ourHLS} is the weighted Sobolev
inequality
\begin{equation}\label{eq:weightS}
  \||x|^{-\beta}u\|_{L^{q}L^{\q}}\lesssim
  \||x|^{\alpha}|D|^{\sigma}u\|_{L^{p}L^{\p}}
\end{equation}
provided $1<p\le q<\infty$, $1\le\p\le\q\le\infty$ and
\begin{equation}\label{eq:condDLsob}
\begin{split}
  &\beta<\frac nq,\quad \alpha<\frac{n}{p'},\quad 0<\sigma<n
    \\
  &\alpha+\beta=\sigma+\frac nq-\frac np
    \\
  &\alpha+\beta\ge(n-1)
    \left(\frac1q-\frac1p+\frac{1}{\p}-\frac{1}{\q}\right).
\end{split}
\end{equation}
As usual, if the last condition is strict we can take $p,q$
in the full range $1\le p\le q\le \infty$. For instance, this
implies the inequality
\begin{equation}\label{eq:infsob}
  |x|^{-\beta}|u(x)|\lesssim
  \||x|^{\alpha}|D|^{\sigma}u\|_{L^{p}L^{\p}}
\end{equation}
provided $1\le p\le \infty$ and
\begin{equation*}
  \begin{split}
    &\beta<0,\quad \alpha<\frac{n}{p'},\quad 0<\sigma<n
      \\
    &\alpha+\beta=\sigma-\frac np
      \\
    &\alpha+\beta>(n-1)
      \left(\frac{1}{\p}-\frac1p\right).
  \end{split}
\end{equation*}
If we choose $\alpha=0$ we have in particular
for $p\in(1,\infty)$, $\p\in[1,\infty]$
\begin{equation}\label{eq:partsob}
  |x|^{\frac np-\sigma}|u(x)|\lesssim
  \||D|^{\sigma}u\|_{L^{p}L^{\p}},\qquad
  \frac {n-1}\p+\frac1p<\sigma <\frac np.
\end{equation}
This extends
to the non radial case the radial inequalities in
\cite{Strauss77-a},
\cite{Ni82-a},
\cite{ChoOzawa09-a}
and many others; notice that in the radial case we can choose
$\p=\infty$ to obtain the largest possible range.
When $\sigma$ is an integer we can replace
the fractional operator $|D|^{\sigma}$ with usual derivatives;
see Corollary \ref{cor:integers} below for a similar argument.

By similar techniques it is possible to derive nonhomogeneous
estimates in terms of norms of type $\|\bra{D}^{\sigma}u\|_{L^{p}}$;
we omit the details.

\subsection{Critical estimates in Besov spaces}\label{sub:besov}  %(fold)

Case (ii) in Theorem \ref{the:Our1Thm} is suitable for
applications to spaces defined via Fourier decompositions,
in particular Besov spaces. We recall the standard
machinery:
fix a $C^{\infty}_{c}$ radial function $\psi_{0}(\xi)$
equal to 1 for $|\xi|<1$ and vanishing for $|\xi|>2$,
define a Littlewood-Paley partition of unity via
$\phi_{0}(\xi)=\psi(\xi)-\psi(\xi/2)$,
$\phi_{j}(\xi)=\phi_{0}(2^{-j}\xi)$,
and decompose $u$ as $u=\sum_{j\in \mathbb{Z}}u_{j}$
where $u_{j}=\phi_{j}(D)u=\mathscr{F}^{-1}\phi_{j}(\xi)\mathscr{F}u$.
Then the homogeneous Besov norm $\dot B^{s}_{p,1}$ is defined as
\begin{equation}\label{eq:besovn}
  \|u\|_{\dot B^{s}_{p,1}}=
  \sum_{j\in \mathbb{Z}}2^{js}\|u_{j}\|_{L^{p}}.
\end{equation}
We can apply Theorem \ref{the:Our1Thm}-(ii)
to each component $u_{j}$ 
in the full range of indices $1\le p\le q\le \infty$,
with a constant independent of $j$.
By the standard trick
$\widetilde{u}_{j}=u_{j-1}+u_{j}+u_{j+1}$, 
$u_{j}=\phi_{j}(D)\widetilde{u}_{j}$ we obtain the estimate
\begin{equation}\label{eq:besovest}
  \||x|^{-\beta}T_{\gamma} u\|
      _{L^{q}_{|x|}L^{\q }_{\theta}} 
  \le C
  \sum_{j\in \mathbb{Z}}
  \| |x|^{\alpha} \widetilde{u}_{j}\|_{L^{p}_{|x|}L^{\p }_{\theta}}
\end{equation}
for the full range $1\le p\le q\le \infty$, $1\le \p\le \q\le \infty$,
with $\alpha,\beta,\gamma$ satisfying \eqref{eq:condDLsob}.
The right hand side can be interpreted as a weighted norm of
Besov type with different radial and angular integrability;
in the special case $\alpha=0$, $p=\p>1$ we obtain a standard
Besov norm \eqref{eq:besovn} and hence the estimate 
(with the optimal choice $\q=\p=p$)
reduces to
\begin{equation}\label{eq:besovtrue}
  \||x|^{-\beta}T_{\gamma} u\|
      _{L^{q}_{|x|}L^{p }_{\theta}} 
  \le C
  \|u\|_{\dot B^{0}_{p,1}}.
\end{equation}
This estimate is weaker than \eqref{ourHLS} when the
third condition in \eqref{eq:condDL} is strict, but
in the case of equality it gives a new estimate:
recalling \eqref{eq:derivT}, we have proved the following

\begin{corollary}\label{cor:besov}
  For all $1< p\le q\le \infty$ we have
  \begin{equation}\label{eq:truesob}
    \||x|^{\frac {n-1}p-\frac {n-1}q} u\|
      _{L^{q}_{|x|}L^{p }_{\theta}} \le C
    \|u\|_{\dot B^{\frac1p-\frac1q}_{p,1}}.
  \end{equation}
\end{corollary}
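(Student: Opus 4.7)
The plan is to realize \eqref{eq:truesob} as the saturation of the weighted Sobolev embedding \eqref{eq:weightS}, accessed through a frequency-localized variant of Theorem \ref{the:Our1Thm}. Concretely, I would set $\alpha = 0$, $\p = \q = p$, $\beta = (n-1)(1/q - 1/p)$, and $\sigma = 1/p - 1/q$. A short arithmetic check shows that these values automatically obey the first two conditions of \eqref{eq:condDLsob} (in particular $\beta \le 0 < n/q$), and that they turn the third condition into an equality rather than a strict inequality.

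Because the weight condition is borderline, I cannot appeal to the unconditional form of Theorem \ref{the:Our1Thm}; instead I would work dyadically. The next step is to decompose $u = \sum_{j \in \mathbb{Z}} u_{j}$ into Littlewood-Paley pieces $u_j = \phi_j(D) u$, apply Theorem \ref{the:Our1Thm}-(ii) with $\gamma = n - \sigma$ to each $\widetilde{u}_j$ (whose Fourier transform is supported in an annulus $|\xi| \sim 2^j$, so the constant is independent of $j$), and then sum in $j$ via the triangle inequality to recover \eqref{eq:besovest}. With the present choice $\alpha = 0$ and $\p = \q = p$, the right-hand side of \eqref{eq:besovest} collapses to $\|u\|_{\dot B^0_{p,1}}$, producing exactly \eqref{eq:besovtrue}.

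To transfer from $T_\gamma u$ on the left-hand side to $u$ itself, I would invoke the pointwise bound \eqref{eq:derivT} with $\lambda = \gamma = n - \sigma$, namely $|u(x)| \lesssim T_\gamma(||D|^\sigma u|)(x)$. Substituting $|D|^\sigma u$ in place of $u$ in the specialization \eqref{eq:besovtrue} just derived and reading off the Besov index yields
\begin{equation*}
  \||x|^{-\beta} u\|_{L^q_{|x|} L^p_\theta}
  \lesssim \|\,|D|^\sigma u\|_{\dot B^0_{p,1}}
  = \|u\|_{\dot B^{1/p - 1/q}_{p,1}},
\end{equation*}
which is \eqref{eq:truesob} since $-\beta = (n-1)(1/p - 1/q)$.

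The delicate point I expect to confront is the saturation of the weight condition: Young-type convolution estimates in the radial variable are not available at this endpoint (only their weak Marcinkiewicz substitutes are), and this is exactly what forces passage through frequency-localized pieces. The $\ell^1$ summation exponent in $\dot B^{s}_{p,1}$ is then essential, as it is what enables the triangle inequality to close the dyadic sum with a constant independent of the number of frequency scales involved.
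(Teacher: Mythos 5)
Your proposal reproduces the paper's argument: apply Theorem \ref{the:Our1Thm}-(ii) to each dyadic piece $\widetilde{u}_j$ (where the annular Fourier support yields a uniform constant), sum with the $\ell^1$ triangle inequality to obtain \eqref{eq:besovest}, and specialize to $\alpha=0$, $\p=\q=p$, $\beta=(n-1)(1/q-1/p)$, $\gamma=n-\sigma$ to hit the borderline $\alpha+\beta=(n-1)(1/q-1/p+1/\p-1/\q)$; your parameter checks are correct and $p>1$ is needed precisely to get $\alpha=0<n/p'$.

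One small caution on the final transfer step. You invoke the pointwise bound \eqref{eq:derivT}, $|u|\lesssim T_\gamma(||D|^\sigma u|)$, and then say you "substitute $|D|^\sigma u$ in place of $u$" in \eqref{eq:besovtrue}. These two moves do not chain cleanly: the pointwise bound would force you to apply \eqref{eq:besovtrue} to the nonnegative function $||D|^\sigma u|$, whose $\dot B^0_{p,1}$ norm is not controlled by $\||D|^\sigma u\|_{\dot B^0_{p,1}}$ (Besov norms are not monotone under taking moduli). What makes the substitution legitimate is the exact Riesz-potential identity $T_\gamma = c_{\gamma,n}|D|^{-(n-\gamma)}=c|D|^{-\sigma}$, so that $T_\gamma(|D|^\sigma u)=cu$ as functions (at the level of each frequency-localized piece this is unambiguous, since $\widehat{u}_j$ is supported away from the origin). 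With this identity in place of the pointwise bound, $\||x|^{-\beta}T_\gamma(|D|^\sigma u)\|_{L^qL^p}\simeq\||x|^{-\beta}u\|_{L^qL^p}$ and the right-hand side is $\||D|^\sigma u\|_{\dot B^0_{p,1}}=\|u\|_{\dot B^\sigma_{p,1}}$ as you wrote. The paper makes the same informal appeal to \eqref{eq:derivT}, so this is a shared gloss rather than a new gap; just be aware that the absolute value in \eqref{eq:derivT} is a red herring here and the operator identity is what the substitution actually requires.
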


If we restrict \eqref{eq:truesob} to radial functions and $q=\infty$, we
obtain the well known radial pointwise estimate
\begin{equation}\label{eq:radialbes}
  |x|^{\frac {n-1}p}|u|\le C
  \|u\|_{\dot B^{1/p}_{p,1}}\qquad
  1< p<\infty
\end{equation}
(see \cite{ChoOzawa09-a}, \cite{SickelSkrzypczak00-a}).

\subsection{Caffarelli-Kohn-Nirenberg weighted interpolation inequalities}
\label{sub:ckn}  %(fold)

Consider the family of inequalities on $\mathbb{R}^{n}$, $n\ge1$
\begin{equation}\label{eq:CKN}
  \||x|^{-\gamma}u\|_{L^{r}}\le C
  \||x|^{-\alpha}\nabla u\|^{a}_{L^{p}}
  \||x|^{-\beta}u\|^{1-a}_{L^{q}}.
\end{equation}
for the range of parameters
\begin{equation}\label{eq:rangeCKN}
  n\ge1,\qquad
  1\le p<\infty,\qquad
  1\le q<\infty,\qquad
  0<r<\infty,\qquad
  0< a\le1.
\end{equation}
Some conditions are immediately seen to be necessary for the validity
of \eqref{eq:CKN}:
to ensure local integrability we need
\begin{equation}\label{eq:intCKN}
  \gamma<\frac nr\qquad
  \alpha<\frac np\qquad
  \beta<\frac nq
\end{equation}
and by scaling invariance we need to assume
\begin{equation}\label{eq:scaCKN}
  \gamma-\frac nr=
  a\left(\alpha+1-\frac np\right)+
  (1-a)\left(\beta-\frac nq\right).
\end{equation}
In \cite{CaffarelliKohnNirenberg84-a} the following remarkable
result was proved, which improves and extends a number of earlier
estimates including weighted Sobolev and Hardy inequalities:

\begin{theorem}[\cite{CaffarelliKohnNirenberg84-a}]\label{the:CKN}
  Consider the inequalities \eqref{eq:CKN} 
  in the range of parameters given by
  \eqref{eq:intCKN}, \eqref{eq:rangeCKN}, \eqref{eq:scaCKN}.
  Denote with $\Delta$ the quantity
  \begin{equation}\label{eq:Delta}
    \Delta=\gamma-a \alpha-(1-a)\beta \equiv
    a +n
      \left(
        \frac1r-\frac{1-a}{q}-\frac ap
      \right)
  \end{equation}
  (the identity in \eqref{eq:Delta} is a reformulation of
  the scaling relation \eqref{eq:scaCKN}).
  Then the inequalities \eqref{eq:CKN} are true if and only if
  both the following conditions are satisfied:
  \begin{enumerate}\setlength{\itemindent}{-0pt}
  \renewcommand{\labelenumi}{\textit{(\roman{enumi})}}
    \item $\Delta\ge0$
    \item $\Delta\le a$ when
    $\gamma-n/r=\alpha+1-n/p$.
  \end{enumerate}
\end{theorem}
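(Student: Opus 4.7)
I would deduce Theorem \ref{the:CKN} from the classical Stein-Weiss inequality \eqref{eq:stw} combined with a Hölder interpolation; necessity of (i)--(ii) is verified by scaling/translation tests on bump functions.

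\textbf{Necessity.} The scaling identity \eqref{eq:scaCKN} is implicit in the hypotheses and is forced by $u\mapsto u(\lambda\,\cdot\,)$; the local integrability conditions \eqref{eq:intCKN} are clear by testing with generic smooth bumps. To obtain $\Delta\ge 0$, apply the inequality to translates $u(\cdot-x_0)$ of a fixed bump and let $|x_0|\to\infty$: each of the three power weights is essentially constant on the support of the translate, and the resulting scalar inequality combined with \eqref{eq:scaCKN} forces $\Delta\ge 0$. For $\Delta\le a$ in case (ii), concentrate a bump at the origin: under the matching scaling $\gamma-n/r=\alpha+1-n/p$ the gradient and $L^r$ terms share the same dilation behavior, and comparison against the $L^q$ factor produces the extra constraint.

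\textbf{Sufficiency.} Set
\[
\gamma_1 := \frac{\gamma - (1-a)\beta}{a}, \qquad \frac{1}{r_1} := \frac{1}{a}\left(\frac{1}{r}-\frac{1-a}{q}\right),
\]
so that $\gamma=a\gamma_1+(1-a)\beta$ and $1/r=a/r_1+(1-a)/q$. Hölder's inequality with exponents $r_1/a$ and $q/(1-a)$ yields
\[
\||x|^{-\gamma}u\|_{L^r} \le \||x|^{-\gamma_1}u\|_{L^{r_1}}^{a}\,\||x|^{-\beta}u\|_{L^q}^{1-a},
\]
reducing the task to the weighted Sobolev-Hardy estimate $\||x|^{-\gamma_1}u\|_{L^{r_1}}\le C\||x|^{-\alpha}\nabla u\|_{L^p}$. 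The pointwise bound $|u(x)|\le C\,T_{n-1}(|\nabla u|)(x)$ (the $\lambda=n-1$ case of \eqref{eq:derivT}) together with Stein-Weiss \eqref{eq:stw} applied to $\phi=|\nabla u|$ with weight exponents $(-\alpha,\gamma_1,n-1)$ and Lebesgue indices $(p,r_1)$ closes this. Using \eqref{eq:scaCKN} one checks that the Stein-Weiss scaling identity is automatic, that the sign condition $-\alpha+\gamma_1\ge 0$ rewrites exactly as $\Delta/a\ge 0$, and that $\gamma_1<n/r_1$ and $-\alpha<n/p'$ follow from \eqref{eq:intCKN}.

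\textbf{Main obstacle.} The Hölder + Stein-Weiss scheme naturally covers only the sub-regime $r_1\ge p$, which a short computation shows is equivalent to $\Delta\le a$. When $\Delta>a$ (allowed outside case (ii)) one has $r_1<p$ and classical Stein-Weiss no longer applies; I would then decompose $u$ into pieces supported near and far from the origin and estimate each separately by the scale-invariant inequality \eqref{eq:stw}, patching via a dyadic sum in $|x|$. The endpoints $p=1$, $r_1=\infty$, and the saturated case $\Delta=a$ (which reduces to a weighted Hardy bound at $r_1=p$) require either extrapolation from the strict regime, a direct one-dimensional Hardy argument along radial rays, or an appeal to the pointwise Sobolev-type inequality \eqref{eq:infsob}. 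These borderline configurations are the main technical nuisance, while the bulk of the theorem follows cleanly from the two-line Hölder + Stein-Weiss reduction above.
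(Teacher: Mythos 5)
You should first note that the paper does not prove Theorem~\ref{the:CKN}: it is cited directly from \cite{CaffarelliKohnNirenberg84-a}. The paper's own contribution is Theorem~\ref{the:Our2Thm} (and Corollary~\ref{cor:integers}), and Remark~\ref{rem:comparCKN} explicitly records that their machinery recovers the classical CKN inequality \emph{only} in the sub-range $0\le\Delta\le a$. For that sub-range, your scheme --- Hölder interpolation to split off the $L^q$ factor, then reduce $\||x|^{-\gamma_1}u\|_{L^{r_1}}\lesssim\||x|^{-\alpha}\nabla u\|_{L^p}$ to the pointwise bound $|u|\lesssim T_{n-1}(|\nabla u|)$ plus Stein--Weiss --- is precisely the reduction the paper carries out in Section~\ref{sec:proof_of2}, specialized to $p=\p$, $q=\q$, $r=\R$, $\sigma=1$. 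So for $0\le\Delta\le a$ your proposal is correct and matches the paper's route.

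The genuine gap is $\Delta>a$. You correctly identify that $\Delta\le a$ is equivalent to $r_1\ge p$, and that your reduction breaks for $r_1<p$. But the fix you sketch --- ``decompose $u$ near/far from the origin and patch \eqref{eq:stw} dyadically'' --- does not obviously close this: Stein--Weiss requires $p\le q$ already in the scale-invariant form on a single annulus, and dyadic restriction does not relax that constraint. When $r_1<p$ the inequality $\||x|^{-\gamma_1}u\|_{L^{r_1}}\lesssim\||x|^{-\alpha}\nabla u\|_{L^p}$ is not a consequence of \eqref{eq:stw} at all; the original CKN argument handles this regime with a genuinely different case analysis (essentially a Hardy-type argument exploiting compact support, interpolated nonlinearly in $a$), not by patching the HLS/Stein--Weiss bound. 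As your proposal stands, it proves only the same sub-range the paper's Theorem~\ref{the:Our2Thm} yields, and the statement's full ``if and only if'' in the range $\Delta\ge0$ remains unproved. Two smaller remarks: your necessity sketch for (ii) (``concentrate a bump at the origin'') is vaguer than what actually works; the paper's Remark~\ref{rem:original} points to the specific logarithmic spike $|x|^{\gamma-n/r}\log|x|^{-1}$ truncated near $x=0$, which is the standard extremizer here. Also note the CKN theorem allows $0<r<1$ in \eqref{eq:rangeCKN}, a range outside the Lebesgue-space duality on which Stein--Weiss rests, so that endpoint also needs its own treatment and cannot be reached by your reduction.
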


\begin{remark}\label{rem:original}
  Notice that in the original formulation of 
  \cite{CaffarelliKohnNirenberg84-a} also the case $a=0$
  was considered, but with the introduction of an additional parameter
  forcing $\beta=\gamma$ when $a=0$. Thus the case $a=0$
  becomes trivial
  in the original formulation; however, at least for $r>1$,
  a much larger range $0\le \gamma-\beta<n$ can be obtained
  by a direct application of the Hardy-Littlewood-Sobolev inequality,
  so strictly speaking the additional requirement $\beta=\gamma$ is not
  necessary. We think the formulation adopted here is cleaner.
  
  On the other hand, the necessity of (i) follows from
  the uniformity of the estimate w.r.to translations, while the
  necessity of (ii) is proved by testing the inequality on
  the spikes $|x|^{\gamma-n/r}\log|x|^{-1}$ truncated near $x=0$.
\end{remark}

In \cite{DenapoliDrelichmanDuran11-a} 
the authors prove the
following radial improvement
of Theorem \ref{the:CKN}:

\begin{theorem}[\cite{DenapoliDrelichmanDuran11-a}]
\label{the:CKNDDD}
  Let $n\ge2$,
  let $\alpha,\beta,\gamma,r,p,q,a$ be in the range determined by
  \eqref{eq:intCKN}, \eqref{eq:rangeCKN}, \eqref{eq:scaCKN},
  define $\Delta$ as in \eqref{eq:Delta}, and assume that
  \begin{equation}\label{eq:CKNDDD}
    a\left(1-\frac np\right)\le \Delta\le a,\qquad
    \alpha<\frac np-1,
  \end{equation}
  the first inequality being strict
  when $p=1$. Then estimate \eqref{eq:CKN} is true for
  all radial functions $u\in C^{\infty}_{c}(\mathbb{R}^{n})$.
\end{theorem}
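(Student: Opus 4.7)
The plan is to derive the inequality from a radial weighted Sobolev estimate obtained from Theorem \ref{DeNapoli1Thm}, combined with a H\"older interpolation. The case $a=1$ is itself a weighted Sobolev embedding, so the substance of the argument is the interpolated case $0<a<1$.

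To carry out the interpolation, introduce auxiliary parameters $(p_0,\alpha_0)$ characterised by the H\"older relations
$$\frac{1}{r}=\frac{a}{p_0}+\frac{1-a}{q},\qquad \gamma=a\alpha_0+(1-a)\beta.$$
A short computation using \eqref{eq:scaCKN} yields
$$\frac{\Delta}{a}=1-n\Bigl(\frac{1}{p}-\frac{1}{p_0}\Bigr),\qquad \alpha_0-\alpha=\frac{\Delta}{a},$$
so the hypothesis $a(1-n/p)\le\Delta\le a$ is exactly equivalent to the auxiliary Sobolev target exponent lying in the admissible range $p\le p_0\le\infty$. Factoring $u=u^a\cdot u^{1-a}$ and $|x|^{-\gamma}=|x|^{-a\alpha_0}\,|x|^{-(1-a)\beta}$ and applying H\"older with exponents $(p_0/a,\,q/(1-a))$ gives
$$\||x|^{-\gamma}u\|_{L^r}\le \||x|^{-\alpha_0}u\|_{L^{p_0}}^{a}\,\||x|^{-\beta}u\|_{L^q}^{1-a}.$$

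It remains to control the first factor by $\||x|^{-\alpha}\nabla u\|_{L^p}$. For $u\in C^\infty_c$ the Riesz-potential representation yields the pointwise bound $|u(x)|\lesssim T_{n-1}(|\nabla u|)(x)$, and the desired radial weighted Sobolev estimate then follows from Theorem \ref{DeNapoli1Thm} applied to $\phi=|\nabla u|$ with $\gamma_{\mathrm{SW}}=n-1$, $\alpha_{\mathrm{SW}}=-\alpha$, $\beta_{\mathrm{SW}}=\alpha_0$, $(p_{\mathrm{SW}},q_{\mathrm{SW}})=(p,p_0)$. A routine check reduces the hypotheses of that theorem, via the identities above, to the CKN scaling relation \eqref{eq:scaCKN}, the integrability bounds $n/p-n<\alpha<n/p-1$ (compatible for $n\ge 2$, the upper half being the DDD hypothesis), and the radial improvement
$$\alpha_{\mathrm{SW}}+\beta_{\mathrm{SW}}\ge(n-1)\Bigl(\frac{1}{p_0}-\frac{1}{p}\Bigr),$$
which collapses via $\alpha_0-\alpha=\Delta/a$ to $\Delta\ge -(n-1)a$, automatic under $\Delta\ge 0$.

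The main obstacle is the endpoint $p=1$, where Theorem \ref{DeNapoli1Thm} does not apply because it requires $p>1$; this is precisely why the hypothesis $a(1-n/p)\le\Delta$ must be strengthened to a strict inequality there. Strictness forces $p_0>1$, so one recovers the $L^1\to L^{p_0}$ Sobolev step either by Marcinkiewicz interpolation between two interior target exponents bracketing $p_0$, or by a direct monotonicity argument available for radial $u$. Apart from this endpoint issue, the entire argument is a bookkeeping of index identities, and the radial symmetry of $u$ enters only through the invocation of Theorem \ref{DeNapoli1Thm}.
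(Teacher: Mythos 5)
Your strategy --- factor $u$ via H\"older into a weighted-Sobolev leg and a weighted-$L^q$ leg, then close the Sobolev leg by combining the pointwise Riesz-potential bound $|u|\lesssim T_{n-1}(|\nabla u|)$ with the radial Stein--Weiss Theorem \ref{DeNapoli1Thm} --- is essentially the route the paper follows, specialised to the radial setting. The paper instead establishes the mixed radial-angular CKN estimate (Theorem \ref{the:Our2Thm}, Corollary \ref{cor:integers}) via the mixed Stein--Weiss Theorem \ref{the:Our1Thm}, and then recovers Theorem \ref{the:CKNDDD} for radial data by freeing up the auxiliary angular exponents (Remark \ref{rem:comparCKN}). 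Your index bookkeeping for $(p_0,\alpha_0)$, the identity $\alpha_0-\alpha=\Delta/a$, and the reduction of the radial-improvement condition to $\Delta\ge-(n-1)a$ are all correct; note, however, that the latter follows from $\Delta\ge a(1-n/p)$ together with $p\ge1$, not from $\Delta\ge0$, which is not among the hypotheses of this theorem.

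There is a second endpoint you have not handled. You correctly translate $\Delta\ge a(1-n/p)$ into $1/p_0\ge0$ and you describe the admissible range as $p\le p_0\le\infty$; but Theorem \ref{DeNapoli1Thm} inherits from Theorem \ref{SteinWeissThm} the restriction $q<\infty$. When $p>1$ and $\Delta=a(1-n/p)$ --- a configuration the theorem allows, since strictness is only demanded at $p=1$, and which is realised for $0<a<1$ --- your Sobolev target becomes $p_0=\infty$, so the step
\begin{equation*}
\||x|^{-\alpha_0}u\|_{L^\infty}\lesssim\||x|^{-\alpha}\nabla u\|_{L^p}
\end{equation*}
falls outside the range of the cited Stein--Weiss result. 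This weighted $L^\infty$ radial embedding is of course true for $u\in C^\infty_c$ (it is a Strauss/Ni-type bound, obtained e.g.\ from $u(r)=-\int_r^\infty u'(s)\,ds$ and H\"older, using $\alpha_0<0$ to control the boundary), but it needs a separate argument parallel to the one you already sketch for the $p=1$ endpoint. The paper sidesteps both endpoints at once by working through the mixed-norm Theorem \ref{the:Our1Thm} and turning on a small positive angular defect $\widetilde{\Delta}$ at the boundary so that condition \eqref{eq:IIco} becomes strict; your more direct route is sound once the missing $L^\infty$ target case is supplied.
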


We somewhat simplified the statement of Theorem 1.1 in
\cite{DenapoliDrelichmanDuran11-a}, and in particular conditions
(1.8)-(1.10) in that paper are equivalent to \eqref{eq:CKNDDD} 
here, as it is
readily seen. Notice that 
the condition $\Delta\le a$ forces 
$r$ to be larger than 1.

Using the $L^{p}L^{\p}$ norms we can extend both
Theorems \ref{the:CKN} and \ref{the:CKNDDD}. For greater
generality we prove an estimate with fractional derivatives
\begin{equation*}
  |D|^{\sigma}=(-\Delta)^{\frac \sigma2},\qquad \sigma>0.
\end{equation*}
Our result is the following:

\begin{theorem}\label{the:Our2Thm}
  Let $n\ge2$, $r,\R,p,\p,q,\q\in[1,+\infty)$, $0<a\le1$,
  $0<\sigma<n$ with
  \begin{equation}\label{eq:Ico}
    \gamma<\frac nr,\qquad
    \beta<\frac nq,\qquad
    \frac np-n<\alpha<\frac np-\sigma
  \end{equation}
  satisfying the scaling condition
  \begin{equation}\label{eq:scalingCKN}
    \gamma-\frac nr=
    a\left(\alpha+\sigma-\frac nr\right)+
    (1-a)\left(\beta-\frac nq\right).
  \end{equation}
  Define the quantities
  \begin{equation}\label{eq:Deltas}
    \Delta=a \sigma+n
      \left(
        \frac1r-\frac{1-a}{q}-\frac ap
      \right),\qquad
    \widetilde{\Delta}=a \sigma+n
      \left(
        \frac1\R-\frac{1-a}{\q}-\frac a\p
      \right).
  \end{equation}
  and assume further that
  \begin{equation}\label{eq:IIco}
    \Delta+(n-1)\widetilde{\Delta}\ge0,
  \end{equation}
  \begin{equation}\label{eq:IIIco}
    1<p,\qquad
    a\left(\sigma-\frac np\right)<\Delta\le a \sigma, \qquad 
    a\left(\sigma-\frac n\p\right)\le \widetilde{\Delta}\le a \sigma.
  \end{equation}
  Then the following interpolation inequality holds:
  \begin{equation}\label{eq:CKNnostra}
    \||x|^{-\gamma}u\|_{L^{r}_{|x|}L^{\R}_{\theta}}\le C
    \||x|^{-\alpha}|D|^{\sigma} u\|^{a}_{L^{p}_{|x|}L^{\p}_{\theta}}
    \||x|^{-\beta}u\|^{1-a}_{L^{q}_{|x|}L^{\q}_{\theta}}.
  \end{equation}
  If one assumes strict inequality in \eqref{eq:IIco}, then
  the inequalities in \eqref{eq:IIIco} can be relaxed to 
  non strict inequalities.
\end{theorem}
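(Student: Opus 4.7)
The approach is to reduce \eqref{eq:CKNnostra} to two ingredients already at our disposal: the mixed-norm weighted Sobolev embedding that follows from Theorem~\ref{the:Our1Thm}, and a H\"older interpolation in mixed-norm spaces with power weights. The substantive content of the statement is that the restrictions on the auxiliary intermediate indices translate, via H\"older's exponent relations and the scaling identity \eqref{eq:scalingCKN}, exactly into \eqref{eq:Ico}--\eqref{eq:IIIco}.

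\emph{Step 1 (Sobolev).} The pointwise Riesz bound $|u(x)|\le C\,T_{n-\sigma}(||D|^{\sigma}u|)(x)$ in \eqref{eq:derivT}, combined with Theorem~\ref{the:Our1Thm} applied with $n-\sigma$ in place of $\gamma$ and with the Stein--Weiss weight $-\alpha$ on the source side, produces, for intermediate indices $s,\s$ to be fixed in Step~2,
\begin{equation*}
  \||x|^{-\mu}u\|_{L^{s}_{|x|}L^{\s}_{\theta}}
   \le C\,\||x|^{-\alpha}|D|^{\sigma}u\|_{L^{p}_{|x|}L^{\p}_{\theta}},
   \qquad
   \mu=\alpha+\sigma+\frac{n}{s}-\frac{n}{p},
\end{equation*}
the value of $\mu$ being forced by the scaling condition of \eqref{eq:cDL}.

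\emph{Step 2 (H\"older).} I choose $s,\s$ by the convex combinations $1/r=a/s+(1-a)/q$ and $1/\R=a/\s+(1-a)/\q$, and factor $|x|^{-\gamma}|u|=(|x|^{-\mu}|u|)^{a}\,(|x|^{-\beta}|u|)^{1-a}$. H\"older's inequality on $\mathbb{S}^{n-1}$ followed by H\"older on $(\mathbb{R}^{+},\rho^{n-1}d\rho)$ then yields
\begin{equation*}
  \||x|^{-\gamma}u\|_{L^{r}_{|x|}L^{\R}_{\theta}}
   \le
   \||x|^{-\mu}u\|^{a}_{L^{s}_{|x|}L^{\s}_{\theta}}
   \,\||x|^{-\beta}u\|^{1-a}_{L^{q}_{|x|}L^{\q}_{\theta}},
\end{equation*}
valid provided $\gamma=a\mu+(1-a)\beta$; with the $\mu$ of Step~1, this matching is precisely \eqref{eq:scalingCKN}. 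Combining with Step~1 gives \eqref{eq:CKNnostra}.

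\emph{Step 3 (Matching the hypotheses).} It remains to verify that the conditions of Theorem~\ref{the:Our1Thm} imposed in Step~1, together with the H\"older integrability constraints of Step~2, are equivalent to \eqref{eq:Ico}--\eqref{eq:IIIco}. Direct substitution gives: $\mu<n/s$ becomes $\alpha<n/p-\sigma$ and $-\alpha<n/p'$ becomes $\alpha>n/p-n$, i.e.\ \eqref{eq:Ico}; the admissible ranges $p\le s<\infty$ and $\p\le\s\le\infty$ for the intermediate indices become $a(\sigma-n/p)<\Delta\le a\sigma$ and $a(\sigma-n/\p)\le\widetilde{\Delta}\le a\sigma$, i.e.\ \eqref{eq:IIIco}; finally, the angular-radial condition of Theorem~\ref{the:Our1Thm}, namely $-\alpha+\mu\ge(n-1)(1/s-1/p+1/\p-1/\s)$, reduces after substituting $\mu=\alpha+\sigma+n/s-n/p$ and using the H\"older relations $a/s=1/r-(1-a)/q$, $a/\s=1/\R-(1-a)/\q$, to $\Delta+(n-1)\widetilde{\Delta}\ge 0$, i.e.\ \eqref{eq:IIco}. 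The final relaxation claim is handled by invoking case~(i) of Theorem~\ref{the:Our1Thm}: under strict inequality in \eqref{eq:IIco} the admissible radial range extends to $1\le p\le s\le\infty$, which removes the strictness of the first two inequalities in \eqref{eq:IIIco}. The main obstacle is the bookkeeping of Step~3: a priori one faces half a dozen independent constraints, and the scaling identity \eqref{eq:scalingCKN} must be invoked repeatedly to collapse them onto exactly the three lines of the statement; all the genuinely analytic input has already been absorbed into Theorem~\ref{the:Our1Thm} via Step~1.
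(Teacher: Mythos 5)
Your proposal is correct and follows essentially the same route as the paper: use the pointwise Riesz bound together with Theorem \ref{the:Our1Thm} to get a weighted mixed-norm Sobolev embedding with an intermediate exponent pair $(s,\s)$, interpolate via H\"older in the angular and radial variables, and then eliminate the intermediate parameters (your $\mu,s,\s$ are the paper's $\delta,s,\s$) using the scaling identity to obtain exactly \eqref{eq:Ico}--\eqref{eq:IIIco}. The only differences are the order in which the two ingredients are presented and the name of the intermediate weight exponent; the bookkeeping, including the relaxation via case~(i) of Theorem \ref{the:Our1Thm} when \eqref{eq:IIco} is strict, matches the paper's.
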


When $\sigma$ is an integer, the condition on $\alpha$
from below can be dropped, and a slightly stronger estimate 
can be proved.
We introduce the notation
\begin{equation*}
  \||x|^{-\alpha}D^{\sigma} u\|_{L^{p}L^{\p}}=
  \sum_{|\nu|=\sigma}
  \||x|^{-\alpha}D^{\nu} u\|_{L^{p}L^{\p}},\qquad
  \nu=(\nu_{1},\dots,\nu_{n})\in \mathbb{N}^{n}.
\end{equation*}
Then we have:

\begin{corollary}\label{cor:integers}
  Assume $\sigma=1,\dots,n-1$ is an integer. Then
  the following estimate holds
  \begin{equation}\label{eq:CKNnostraint}
    \||x|^{-\gamma}u\|_{L^{r}_{|x|}L^{\R}_{\theta}}\le C
    \||x|^{-\alpha}D^{\sigma} u\|^{a}_{L^{p}_{|x|}L^{\p}_{\theta}}
    \||x|^{-\beta}u\|^{1-a}_{L^{q}_{|x|}L^{\q}_{\theta}}.
  \end{equation}
  provided the parameters satisfy the same conditions as in the
  previous theorem, with the exception of the condition
  $\alpha>-n+n/p$
  which is not necessary.
\end{corollary}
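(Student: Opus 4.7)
The plan is to reduce Corollary \ref{cor:integers} to Theorem \ref{the:Our2Thm} by exploiting a weighted Hardy inequality which, unlike the Stein--Weiss estimate underlying Theorem \ref{the:Our1Thm}, requires no lower bound on the weight. The integer-order hypothesis enters the argument only through the application of this Hardy inequality.

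The key auxiliary tool is the mixed-norm Hardy inequality
\begin{equation*}
\||x|^{-\mu-1}v\|_{L^{p}_{|x|}L^{\p}_{\theta}}
\le C\,\||x|^{-\mu}\nabla v\|_{L^{p}_{|x|}L^{\p}_{\theta}},
\qquad \mu\neq n/p-1,
\end{equation*}
which one obtains by applying the classical one-dimensional Hardy inequality in the radial variable for each fixed $\theta\in\mathbb{S}^{n-1}$ (using the pointwise bound $|\partial_{r}v|\le|\nabla v|$), then integrating over the sphere, with Minkowski's integral inequality handling the case $\p\neq p$. Iterating this bound $k$ times on successive derivatives of $u$, one obtains
\begin{equation*}
\||x|^{-\alpha-k}D^{\sigma-k}u\|_{L^{p}_{|x|}L^{\p}_{\theta}}
\le C\,\||x|^{-\alpha}D^{\sigma}u\|_{L^{p}_{|x|}L^{\p}_{\theta}},
\qquad 0\le k\le\sigma,
\end{equation*}
the non-resonance condition at each step being automatic from the strict inequality $\alpha<n/p-\sigma$.

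I would then split the proof into two regimes. When $\alpha>n/p-n$, the inequality \eqref{eq:CKNnostraint} follows directly from Theorem \ref{the:Our2Thm} together with the comparability $\|\,|D|^{\sigma}u\|\simeq\sum_{|\nu|=\sigma}\|D^{\nu}u\|$ in the weighted $L^{p}L^{\p}$ norm, which holds in this range via the boundedness of Riesz transforms on Muckenhoupt $A_{p}$ weights. When instead $\alpha\le n/p-n$, I choose the smallest integer $k\in\{1,\dots,\sigma\}$ with $\alpha+k>n/p-n$, apply Theorem \ref{the:Our2Thm} with shifted parameters $(\alpha+k,\sigma-k)$ in place of $(\alpha,\sigma)$, and combine with the iterated Hardy inequality above. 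Under this joint shift the scaling relation \eqref{eq:scalingCKN} and the conditions \eqref{eq:Ico}, \eqref{eq:IIIco} are invariant, as a direct computation shows. In the borderline case $k=\sigma$, where no derivative remains on the right-hand side, \eqref{eq:CKNnostraint} collapses to a pure weighted H\"older interpolation between $\||x|^{-\alpha-\sigma}u\|_{L^{p}L^{\p}}$ and $\||x|^{-\beta}u\|_{L^{q}L^{\q}}$, which holds automatically since the scaling-enforced relations on exponents and weights then reduce precisely to the H\"older matching condition.

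The main obstacle is the angular condition \eqref{eq:IIco}, $\Delta+(n-1)\widetilde{\Delta}\ge0$: this is the one hypothesis that is not invariant under the shift $(\alpha,\sigma)\to(\alpha+k,\sigma-k)$, since $\Delta$ and $\widetilde{\Delta}$ each decrease by $ak$ and so the left-hand side of \eqref{eq:IIco} drops by $akn$. Ensuring that the shifted problem still satisfies \eqref{eq:IIco} therefore requires a finer analysis in the regime $\alpha\approx n/p-n$, where the strict bound $\alpha<n/p-\sigma$ together with the angular inequalities \eqref{eq:IIIco} must be exploited to absorb the defect. This bookkeeping, essentially algebraic, is the delicate part of the argument.
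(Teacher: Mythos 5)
You have correctly identified that a mixed-norm weighted Hardy inequality is the essential tool for removing the lower bound on $\alpha$, and your statement of that inequality (and the observation that the integer hypothesis on $\sigma$ enters exactly here) is on the right track. However, your reduction has a genuine gap, which you yourself flag at the end: the shift $(\alpha,\sigma)\mapsto(\alpha+k,\sigma-k)$ lowers both $\Delta$ and $\widetilde\Delta$ by $ak$, so the angular condition \eqref{eq:IIco} drops by $akn$ and generically fails after the shift. You do not resolve this, and it is not a minor bookkeeping issue — for parameters at or near equality in \eqref{eq:IIco}, your shifted problem simply falls outside the range of Theorem \ref{the:Our2Thm}, and no amount of exploiting \eqref{eq:IIIco} can recover a loss of $akn$ in \eqref{eq:IIco}.

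The paper avoids this by using a \emph{different} shift. Instead of trading derivatives for weight in the norm of $u$, one keeps $\sigma$ fixed and shifts \emph{all three} power weights $\gamma,\alpha,\beta$ down by the same integer $k$. Concretely: first establish the case $k=0$ of \eqref{eq:goalk} by re-running the proof of Theorem \ref{the:Our2Thm} with the pointwise bound $|u|\le C\,T_{\lambda}(|D^{n-\lambda}u|)$ (the integer analogue \eqref{eq:fractest2} of \eqref{eq:fractest}) — this also sidesteps your appeal to Riesz-transform boundedness on weighted mixed norms, which is not a readily available tool here. Then apply that inequality to $|x|^{k}u$ rather than to $u$, which yields \eqref{eq:goalk} once one has the commutation estimate
\begin{equation*}
\||x|^{-\alpha}D^{\sigma}(|x|^{k}u)\|_{L^{p}L^{\p}}\ \lesssim\ \||x|^{k-\alpha}D^{\sigma}u\|_{L^{p}L^{\p}},\qquad \alpha<\tfrac{n}{p}.
\end{equation*}
The whole point is that the conditions \eqref{eq:Ico}--\eqref{eq:IIIco} involve $\gamma,\alpha,\beta$ only through the scaling relation (invariant under a common shift) and the one-sided bounds $\gamma<n/r$, $\beta<n/q$, $\alpha<n/p-\sigma$ (which only improve when one subtracts $k$), while $\Delta$ and $\widetilde\Delta$ do not depend on $\gamma,\alpha,\beta$ at all; the \emph{only} condition that degrades is the lower bound $\alpha>n/p-n$, which is precisely the one the corollary removes. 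The commutation estimate itself reduces, via Leibnitz and induction, to exactly your Hardy inequality \eqref{eq:interm4}.

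Two smaller remarks. Your proof sketch of the mixed-norm Hardy inequality — ``apply one-dimensional Hardy along each ray and then integrate over $\mathbb{S}^{n-1}$ with Minkowski'' — does not in fact produce the $L^{p}_{|x|}L^{\p}_{\theta}$ estimate when $\p\neq p$: Minkowski goes the wrong way on one of the two sides. The correct argument is to set $\phi(\rho)=\|u(\rho\,\cdot\,)\|_{L^{\p}(\mathbb{S}^{n-1})}$, prove $|\phi'(\rho)|\le\|\nabla u(\rho\,\cdot\,)\|_{L^{\p}(\mathbb{S}^{n-1})}$, and then apply the one-dimensional weighted Hardy inequality (integration by parts plus H\"older) to $\phi$ alone. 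Also, the condition on the weight exponent is one-sided ($\delta>1-n/p$ in the paper's normalization, coming from the vanishing of the boundary term at $\rho=0$), not merely the non-resonance condition $\mu\neq n/p-1$ you wrote; since the relevant $\phi$ does not vanish at $\rho=0$, the two-sided version is not available here.
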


\begin{remark}\label{rem:comparCKN}
  If $\sigma=1$, Corollary \ref{cor:integers} contains both the
  original result of \cite{CaffarelliKohnNirenberg84-a}
  (for $\Delta\le a$)
  and the radial improvement of \cite{DenapoliDrelichmanDuran09-a}.
  
  Indeed,
  if we choose $p=\p$, $q=\q$, $r=\R$ in Corollary \ref{cor:integers}
  we get of course $\Delta=\widetilde{\Delta}$, and selecting
  $\sigma=1$ we reobtain the original inequality 
  \eqref{eq:CKN} in the range $0\le \Delta\le a$.
  
  On the other hand, if $u$ is a radial function, estimate 
  \eqref{eq:CKNnostraint} does not
  depend on the choice of $\p,\q,\R$ and we can let $\widetilde{\Delta}$
  assume an arbitrary value in the range \eqref{eq:IIIco}.
  Thus if $\Delta>a(\sigma-n/p)$ we can choose $\widetilde{\Delta}=0$,
  while if $\Delta=a(\sigma-n/p)$ we can choose
  $\widetilde{\Delta}=\epsilon>0$ arbitrarily small, recovering
  the results of Theorem \ref{the:CKNDDD}.
\end{remark}

The classical application of CKN estimates is to the regularity
of solutions to the Navier-Stokes equation; this will be the subject
of forthcoming papers.

\subsection{Strichartz estimates for the wave equation}\label{sub:WE}  %(fold)

As a last example, we
mention an application of our result to Strichartz
estimates for the wave equation; a more detailed
analysis will be conducted elsewhere.
The wave flow $e^{it|D|}$ on $\mathbb{R}^{n}$, $n\ge2$,
satisfies the estimates,
which are usually called \emph{Strichartz estimates}:
\begin{equation}\label{eq:strich}
  \||D|^{\frac nr+\frac1p-\frac n2}e^{it|D|}f\|_{L^{p}_{t}L^{r}_{x}}
  \lesssim \|f\|_{L^{2}}
\end{equation}
provided the indices $p,r$ satisfy
\begin{equation}\label{eq:admWE}
  p\in[2,\infty],\qquad
  0<\frac1r\le\frac12-\frac{2}{(n-1)p}.
\end{equation}
Here the $L^{p}_{t}L^{r}_{x}$ norms are defined as
\begin{equation*}
  \|u(t,x)\|_{L^{p}_{t}L^{r}_{x}}=
  \left\|
     \|u(t,\cdot)\|_{L^{r}_{x}}
  \right\|_{L^{p}_{t}}.
\end{equation*}
In their most general version, the estimates were proved in 
\cite{GinibreVelo95-b},
\cite{KeelTao98-a}. Notice that in \eqref{eq:strich} we included
the extension of the estimates which can be obtained via
Sobolev embedding on $\mathbb{R}^{n}$. 

If the initial value $f$ is a radial function, the estimates admit
an improvement in the sense that conditions \eqref{eq:admWE}
can be relaxed to
\begin{equation}\label{eq:admradial}
  p\in[2,\infty],\qquad
  0<\frac1r<\frac12-\frac{1}{(n-1)p}.
\end{equation}
This phenomenon is connected with the finite speed of
propagation for the wave equation and is usually deduced 
using the space-time decay properties of the equation.
For a thourough discussion and a comprehensive history
of such estimates see 
e.g.~\cite{JiangWangYu10-a} and the references therein.

A different set of estimates are the \emph{smoothing estimates},
also known as Morawetz-type or weak dispersion estimates.
These appear in a large number of versions;
a particularly sharp one is the following, from
\cite{FangWang08-a}:
\begin{equation}\label{eq:smooWE}
  \||x|^{-\zeta}|D|^{\frac12-\zeta}
     e^{it|D|}f\|_{L^{2}_{t}L^{2}_{x}}
  \lesssim \|\Lambda^{\frac12-\zeta} f\|_{L^{2}},\qquad
  \frac12<\zeta <\frac n2.
\end{equation}
Here the operator
\begin{equation*}
  \Lambda=(1-\Delta_{\mathbb{S}^{n-1}})^{1/2}
\end{equation*}
is a function of the Laplace-Beltrami operator on the
sphere and acts only on angular variables, thus we see
that the flow improves the angular regularity.
Morawetz-type estimates are conceptually simpler than
\eqref{eq:strich}, being related to more basic properties
of the operators; indeed $L^{2}$ estimates of this type
can be proved 
for quite large classes of equations
via multiplier methods.

Corresponding estimates are known for the Schr\"odinger flow
$e^{it \Delta}$, and
M.C.~Vilela \cite{Vilela01-a} noticed that in the radial case
they can be used to
deduce Strichartz estimates via the radial Sobolev embedding. 
Following a similar idea for the wave flow, 
in combination with our precised 
estimates \eqref{eq:weightS}, gives an even better result,
which strengthens the standard Strichartz estimates
\eqref{eq:strich}-\eqref{eq:admWE} in
terms of the mixed $L_{|x|}^{p}L_{\theta}^{\p}$ norms.
Indeed, a special case of \eqref{eq:weightS} gives, 
for arbitrary functions $g(x)$,
\begin{equation}\label{eq:special}
  \|g\|_{L^{q}_{|x|}L^{\q}_{\theta}}\lesssim
  \||x|^{\alpha}|D|^{\alpha+\frac n2-\frac nq}g\|_{L^{2}},\qquad
  q,\q\in[2,\infty),\qquad
  \frac n2>\alpha\ge(n-1)\left(\frac1q-\frac1\q\right)
\end{equation}
with the exclusion of the case $\alpha=0$, $q=\q=2$. Then by
\eqref{eq:special} and \eqref{eq:smooWE}
we obtain the precised Strichartz estimates
\begin{equation}\label{eq:prestrich}
  \||x|^{-\delta}|D|^{\frac nq+\frac12-\frac n2-\delta}e^{it|D|}f\|
      _{L^{2}_{t}L^{q}_{|x|}L^{\q}_{\theta}}
  \lesssim \|\Lambda^{-\epsilon} f\|_{L^{2}}
\end{equation}
provided
\begin{equation}\label{eq:condpre}
  q,\q\in[2,+\infty),\qquad
  \delta<\frac nq,\qquad
  0<\epsilon<\frac{n-1}{2},\qquad
  0<\frac1q<\frac1\q-\frac{1}{2(n-1)}
\end{equation}
and
\begin{equation}\label{eq:condpre2}
  \epsilon\le \delta+(n-1)\left(\frac1\q-\frac{1}{2(n-1)}-\frac1q\right).
\end{equation}

\section{Proof of Theorem \ref{the:Our1Thm}}

The first result we need is an explicit estimate of 
the angular part of the fractional integral
$T_{\gamma}\phi$. Notice that a similar analysis
in the radial case was done in
\cite{DenapoliDrelichmanDuran09-a} (see Lemma 4.2 there).
The following estimates are sharp:

\begin{lemma}\label{lem:singint}
  Let $n\ge2$, $\nu>0$, and write
  $\bra{x}=(1+|x|^{2})^{1/2}$.
  Then the integral
  \begin{equation*}
    I_{\nu}(x)=\int_{\mathbb{S}^{n-1}}|x-y|^{-\nu }dS(y)
    \qquad x\in \mathbb{R}^{n}
  \end{equation*}
  satisfies
  \begin{equation}\label{eq:stima0}
    |I_{\nu}(x)|\simeq\bra{x}^{-\nu }\qquad
    \text{for}\quad|x|\ge2,
  \end{equation}
  while for $|x|\le2$ we have
  \begin{equation}\label{stimaI}
    |I_{\nu}(x)| \simeq
    \left\{ 
    \begin{array}{cc}
      1& 
            \mbox{if} \ \ \nu <n-1 \\
      |\log{||x|-1|}| + 1& 
            \mbox{if} \ \ \nu =n-1 \\
     ||x|-1|^{n-1- \nu }  & 
             \mbox{if} \ \ \nu  > n-1.
    \end{array} 
    \right. 
  \end{equation}
\end{lemma}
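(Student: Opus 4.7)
By the rotation-invariance of the sphere measure, $I_\nu(x)$ depends only on $r=|x|$, so I first reduce to a one-dimensional integral. Choosing coordinates so that $x=re_1$ and writing $y\in \mathbb{S}^{n-1}$ in spherical coordinates $y=\cos\theta\, e_1+\sin\theta\,\omega$ with $\omega\in \mathbb{S}^{n-2}$, the surface measure becomes $\sin^{n-2}\theta\,d\theta\,dS_{\mathbb{S}^{n-2}}(\omega)$. Combined with the algebraic identity
\begin{equation*}
|x-y|^{2}=r^{2}-2r\cos\theta+1=(r-1)^{2}+4r\sin^{2}(\theta/2),
\end{equation*}
this gives
\begin{equation*}
I_\nu(r)=|\mathbb{S}^{n-2}|\int_0^{\pi}\frac{\sin^{n-2}\theta}{\bigl((r-1)^{2}+4r\sin^{2}(\theta/2)\bigr)^{\nu/2}}\,d\theta.
\end{equation*}

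For $r=|x|\ge 2$, I just bound the denominator uniformly. Since $r-1\ge r/2$, one has $|x-y|\ge r/2$, and clearly $|x-y|\le r+1\le 3r/2$; so $|x-y|\simeq r\simeq\langle x\rangle$ uniformly in $y\in \mathbb{S}^{n-1}$, and integrating gives $I_\nu(x)\simeq\langle x\rangle^{-\nu}$, which is the estimate \eqref{eq:stima0}.

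For $|x|\le 2$, I observe that if $r$ is bounded away from $1$ (say $r\le 1/2$ or $3/2\le r\le 2$) the integrand is uniformly of size $1$ and the result $I_\nu\simeq 1$ is immediate; so the only interesting case is $r\in[1/2,3/2]$, where $4r\simeq 1$ and near $\theta=0$ we have $\sin(\theta/2)\simeq \theta$ and $\sin^{n-2}\theta\simeq \theta^{n-2}$, while the contribution coming from $\theta\in[\pi/2,\pi]$ is uniformly bounded and contributes only an $O(1)$ term. Setting $\varepsilon=|r-1|\in[0,1/2]$, the analysis of $I_\nu$ reduces to
\begin{equation*}
J_\nu(\varepsilon)=\int_{0}^{\pi/2}\frac{\theta^{n-2}}{(\varepsilon^{2}+\theta^{2})^{\nu/2}}\,d\theta.
\end{equation*}
I split at $\theta=\varepsilon$: on $[0,\varepsilon]$ the denominator is $\simeq \varepsilon^{\nu}$, giving a contribution of order $\varepsilon^{n-1-\nu}$; on $[\varepsilon,\pi/2]$ the denominator is $\simeq \theta^{\nu}$, giving $\int_\varepsilon^{\pi/2}\theta^{n-2-\nu}d\theta$. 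The three regimes of \eqref{stimaI} now appear by elementary integration: for $\nu<n-1$ this last integral is $O(1)$ and dominates, for $\nu=n-1$ it gives $\log(1/\varepsilon)+O(1)$, and for $\nu>n-1$ it is $\simeq \varepsilon^{n-1-\nu}$, matching the first piece.

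The main (minor) obstacle is just verifying sharpness of the matching lower bounds in each of the three cases; this is done by restricting the $\theta$-integration to the interval where the upper bound was saturated (either $[0,\varepsilon]$ or $[\varepsilon,\pi/2]$, depending on whether $\nu$ is above or below $n-1$), and by checking that the contribution from $\theta$ near $\pi$ is harmless, which follows because $|x-y|\simeq 1$ there.
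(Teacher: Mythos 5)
Your proof is correct, and it takes a genuinely different (and somewhat cleaner) route than the paper's. Both start the same way — rotation invariance reduces $I_\nu$ to a one-dimensional integral over the polar angle $\theta$ — but the paper then changes variable to $\sigma=|x-y|$ and uses Heron's formula for the area of the triangle $0,x,y$ to express the Jacobian, splitting the $\sigma$-integral at $\sigma=|x|$; the paper also treats $1\le|x|\le2$ first and obtains $1/2\le|x|\le1$ by the inversion $|x|\mapsto1/|x|$. You instead keep $\theta$ as the variable and use the elementary identity $|x-y|^2=(r-1)^2+4r\sin^2(\theta/2)$, which immediately isolates the parameter $\varepsilon=|r-1|$ and reduces the singular part to the model integral $J_\nu(\varepsilon)=\int_0^{\pi/2}\theta^{n-2}(\varepsilon^2+\theta^2)^{-\nu/2}\,d\theta$, split at $\theta=\varepsilon$. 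This avoids Heron's formula entirely, treats $|x|<1$ and $|x|>1$ simultaneously (no inversion needed), and the three regimes come out by one-line power-counting. Both arguments are complete and rigorous; yours is a bit more direct, while the paper's change of variable to $\sigma=|x-y|$ is the one it reuses (with the weight $\langle\rho\sigma\rangle^{-\nu}$) to prove the companion Lemma~\ref{lem:singint2}, which is presumably why the authors set it up that way.

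One small point worth spelling out if you wrote this up in full: when you say the $[\varepsilon,\pi/2]$ piece ``dominates'' for $\nu<n-1$, what you actually need for the two-sided bound is that it is bounded \emph{below} by a constant independent of $\varepsilon$ (which it is, since $\int_{1/2}^{\pi/2}\theta^{n-2-\nu}\,d\theta>0$), while the $[0,\varepsilon]$ piece contributes $\varepsilon^{n-1-\nu}\lesssim1$; together with the $O(1)$ two-sided bound for the far hemisphere $\theta\in[\pi/2,\pi]$, this gives $I_\nu\simeq1$. You gesture at exactly this in your last paragraph, so no gap — just be explicit that ``harmless'' means bounded above \emph{and} below by constants.
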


\begin{proof}
We consider four different regimes according to the size of
$|x|$. We write for brevity $I$ instead of $I_{\nu}$.

\subsection*{First case: $|x|\geq 2$} 
For $x$ large and $|y|=1$
we have $|x-y| \simeq |x|$, 
hence $|I(x)| \simeq |x|^{-\nu } 
\simeq \langle x \rangle^{-\nu }$.
This proves \eqref{eq:stima0}.

\subsection*{Second case: $0 \leq |x| \leq \frac{1}{2}$} 
Clearly we have $|x-y| \simeq 1$ when $|y|=1$, 
and this implies 
$|I(x)| \simeq 1 \simeq \langle x \rangle^{-\nu }$.
This is equivalent to \eqref{stimaI} when $|x|\le1/2$.

\subsection*{Third case: $1 \leq |x| \leq 2$} 
This is the bulk of the computation since it contains the
singular part of the integral, as $|x|\to1$.
We write the integral in polar coordinates using the
spherical angles $(\theta_{1},\theta_{2},...,\theta_{n-1})$ 
on $\mathbb{S}^{n-1}$, oriented in such a way that
$\theta_{1}$ is the angle between $x$ and $y$.
Using the notation $\sigma=|x-y|$,
by the symmetry of $I(x)$ in $(\theta_{2},...,\theta_{n-1})$ we have
$$
  |I(x)| \simeq \int_{0}^{\pi}
  \sigma^{-\nu }(\sin{\theta_{1}})^{n-2}d\theta_{1}.
$$
In order to rewrite the integral using $\sigma$ as a new
variable, we compute 
$$
  2\sigma d\sigma = d(|x-y|^{2})=d(|x|+1 -2|x| \cos{\theta_{1}})
  =2|x|\sin{\theta_{1}}d\theta_{1}
$$
so we have
$$
  (\sin{\theta_{1}})^{n-2}d\theta_{1} = 
  \frac{\sigma (\sin{\theta_{1}})^{n-3}}{|x|}d\sigma
$$
and, noticing that $0\le|x|-1\le|x-y|=\sigma\le|x|+1$,
$$
  |I(x)| \simeq \int_{|x|-1}^{|x|+1}
  \sigma^{1-\nu }\frac{(\sin{\theta_{1}})^{n-3}}{|x|}d\sigma.
$$
Now let $A$ be the area of the triangle with vertices
$0,x$ andd $y$: we have $2A=|x|\sin{\theta_{1}}$ so that
$$
  |I(x)| \simeq  |x|^{2-n}\int_{|x|-1}^{|x|+1}
  \sigma^{1-\nu }A^{n-3}d \sigma.
$$
Recalling Heron's formula for the area of a triangle as a function
of the length of its sides we obtain
$$
  |I(x)| \simeq |x|^{2-n}\int_{|x|-1}^{|x|+1}
  \sigma^{1-\nu }
  \Bigl[(|x|+\sigma +1)(|x|+\sigma -1)
  (|x|+1 -\sigma)(\sigma +1 -|x|)\Bigr]^{\frac{n-3}{2}}d\sigma.
$$
Notice that this formula is correct for all dimensions $n\ge2$.

Now we split the integral as $I \simeq I_{1} + I_{2}$
with
$$
  I_{1}(x)= |x|^{2-n}\int_{|x|-1}^{|x|}
  \sigma^{1-\nu }
  \Bigl[
  (|x|+\sigma +1)
  (|x|+\sigma -1)
  (|x|+1 -\sigma)
  (\sigma +1 -|x|)
  \Bigr]^{\frac{n-3}{2}}
  d\sigma
$$
and
$$
  I_{2}(x) = |x|^{2-n}\int_{|x|}^{|x|+1}
  \sigma^{1-\nu }
  \Bigl[
  (|x|+\sigma +1)
  (|x|+\sigma -1)
  (|x|+1 -\sigma)
  (\sigma +1 -|x|)
  \Bigr]^{\frac{n-3}{2}}
  d\sigma.
$$
In the second integral $I_{2}$, recalling that $1\le|x|\le2$,
we have
\begin{equation*}
  |x|\simeq
  \sigma \simeq 
  |x|+\sigma+1 \simeq
  |x|+\sigma-1 \simeq
  \sigma+1-|x| \simeq
  1
\end{equation*}
so that
\begin{equation*}
  I_{2}\simeq
  \int_{|x|}^{|x|+1}(|x|+1-\sigma)^{\frac{n-3}{2}}d \sigma
  = \int_{0}^{1}(1-\sigma)^{\frac{n-3}{2}}d \sigma
  \simeq 1.
\end{equation*}
In the first integral $I_{1}$, using that
$1 \leq |x| \leq 2$ and $|x|-1 \leq \sigma \leq |x|$,
we see that 
\begin{equation*}
  |x|\simeq
  (|x|+\sigma +1)\simeq
  (|x|+1 -\sigma)\simeq
  1;
\end{equation*}
moreover,
\begin{equation*}
  1\le \frac{|x|+\sigma-1}{\sigma}\le 2
  \quad\text{so that}\quad |x|+\sigma-1 \simeq \sigma
\end{equation*}
and we have
$$
  I_{1}(x) \simeq \int_{|x|-1}^{|x|}
  \sigma^{1-\nu  + \frac{n-3}{2}}
  (\sigma+1 -|x|)^{\frac{n-2}{2}}d\sigma
$$
or, after the change of variable $\sigma\to\sigma(|x|-1)$,
$$
  I_{1}(x)\simeq(|x|-1)^{n-1-\nu   }
  \int_{1}^{1+\frac{1}{|x|-1}}
  (\sigma -1)^{\frac{n-3}{2}}
  \sigma ^{\frac{n-1}{2}-\nu }d\sigma .
$$
Now split the last integral as $A+B$ where
$$
   A= (|x|-1)^{n-1-\nu }\int_{1}^{2}
   (\sigma -1)^{\frac{n-3}{2}}\sigma ^{\frac{n-1}{2}-\nu }d\sigma 
$$
and
$$
  B=(|x|-1)^{n-1-\nu }
  \int_{2}^{1+\frac{1}{|x|-1}}(\sigma -1)^{\frac{n-3}{2}}
  \sigma ^{\frac{n-1}{2}-\nu }d\sigma;
$$
we have immediately
$$  
  A \simeq (|x|-1)^{n-1 - \nu }
$$
while, keeping into account that $\sigma  \simeq \sigma -1$ 
for $\sigma$ in $(2,1+\frac{1}{|x|-1})$,
$$
  B=(|x|-1)^{n-1-\nu }
  \int_{2}^{1+\frac{1}{|x|-1}}
  \sigma ^{n-2-\nu }d\sigma
$$
which gives
\begin{equation}\label{andamentoB}
  B\simeq 
    \left\{ \begin{array}{cc}
     1& \mbox{if} \ \ \nu <n-1 \\
     |\log{||x|-1|}| + 1& \mbox{if} \ \ \nu =n-1 \\
     ||x|-1|^{n-1- \nu }  & \mbox{if} \ \ \nu  > n-1 
  \end{array} \right.
\end{equation}

\subsection*{Fourth case: $\frac{1}{2} \leq |x| \leq 1$} 
Using the change of variable $|x'|=1/|x|$, we see that
$|I(x)|\simeq |I(1/|x'|)|$, 
thus the fourth case follows immediately from the third one,
and this concludes the proof of the Lemma.
\end{proof}

We shall also need the following estimate which is
proved in a similar way:

\begin{lemma}\label{lem:singint2}
  Let $n\ge2$, $\nu>0$. Then the integral
  \begin{equation*}
    J_{\nu}(x,\rho)=\int_{\mathbb{S}^{n-1}}
    \bra{x-\rho\theta}^{-\nu}dS(\theta)
    \qquad x\in \mathbb{R}^{n},\ \rho\ge0
  \end{equation*}
  satisfies:
  \begin{equation}\label{eq:stimab0}
    |J_{\nu}(x,\rho)|\simeq\bra{x}^{-\nu }\qquad
    \text{for $\rho\le1$ or $|x|\ge 2\rho$},
  \end{equation}
  \begin{equation}\label{eq:stimac0}
    |J_{\nu}(x,\rho)|\simeq\bra{\rho}^{-\nu }\qquad
    \text{for $|x|\le1$ or $\rho\ge 2|x|$},
  \end{equation}
  while in the remaining case, i.e.~when $|x|\ge1$ and
  $\rho\ge1$ and $2^{-1}|x|\le\rho\le2|x|$,
  \begin{equation}\label{eq:stimabI}
    |J_{\nu}(x,\rho)| \simeq
    \left\{ 
    \begin{array}{cc}
      \bra{\rho}^{-\nu}& 
            \mbox{if} \ \ \nu <n-1 \\
      \bra{\rho}^{-\nu}
          \log\left(\frac{2\bra{\rho}}{\bra{|x|-\rho}}\right)
      & 
            \mbox{if} \ \ \nu =n-1 \\
      \bra{\rho}^{1-n}\bra{|x|-\rho}^{n-1-\nu}  & 
             \mbox{if} \ \ \nu  > n-1.
    \end{array} 
    \right. 
  \end{equation}
  As a consequence, one has
  $J_{\nu}\lesssim \bra{\rho+|x|}^{-\nu}$ when $\nu<n-1$ and
  $J_{\nu}\lesssim \bra{\rho+|x|}^{-\nu}\log(2\bra{\rho}+|x|)$ 
  when $\nu=n-1$.
\end{lemma}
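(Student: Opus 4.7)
The plan is to mirror the four-case decomposition in the proof of Lemma~\ref{lem:singint}, inserting the bracket $\bra{\cdot}$ in place of $|\cdot|$ and carrying the additional radial parameter $\rho$ through.

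For the two easy regimes \eqref{eq:stimab0}--\eqref{eq:stimac0}, the integrand is essentially constant on the sphere. When $\rho\le 1$ we have $|\rho\theta|\le 1$ uniformly in $\theta\in\mathbb{S}^{n-1}$, hence $\bra{x-\rho\theta}\simeq\bra{x}$; when $|x|\ge 2\rho$, the reverse triangle inequality gives $|x-\rho\theta|\ge|x|/2$ and again $\bra{x-\rho\theta}\simeq\bra{x}$. Integrating over $\mathbb{S}^{n-1}$ yields \eqref{eq:stimab0}, and the symmetric argument (swapping the roles of $x$ and $\rho\theta$) yields \eqref{eq:stimac0}.

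The substance of the proof is the critical regime $|x|,\rho\ge 1$ with $|x|/2\le\rho\le 2|x|$. Here I would follow the third case of the proof of Lemma~\ref{lem:singint} step for step: pass to spherical coordinates with $\theta_1$ the angle between $\theta$ and $x/|x|$, change variable to $\sigma=|x-\rho\theta|\in[||x|-\rho|,\,|x|+\rho]$, and apply Heron's formula to the triangle with vertices $0,x,\rho\theta$ to express $\sin\theta_1$ in terms of $\sigma,|x|,\rho$. Writing $P(\sigma):=(|x|+\rho-\sigma)(|x|+\rho+\sigma)(\sigma-||x|-\rho|)(\sigma+||x|-\rho|)$ for the Heron product, the integral takes the form
\begin{equation*}
J_\nu(x,\rho)\simeq (|x|\rho)^{2-n}\int_{||x|-\rho|}^{|x|+\rho}(1+\sigma^2)^{-\nu/2}\,\sigma\,P(\sigma)^{(n-3)/2}\,d\sigma.
\end{equation*}
Using $|x|\simeq\rho$ to simplify $|x|+\rho+\sigma\simeq\rho$ and $\sigma+||x|-\rho|\simeq\sigma$, only the two endpoint factors $(|x|+\rho-\sigma)^{(n-3)/2}$ and $(\sigma-||x|-\rho|)^{(n-3)/2}$ remain genuinely degenerate, balanced against the $\sigma$-decay $(1+\sigma^2)^{-\nu/2}$. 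Splitting the range at $2||x|-\rho|$ (to isolate the lower singularity), at $\sigma\simeq 1$ (where $\bra\sigma$ changes regime), and near $|x|+\rho$ (to isolate the upper endpoint), and bounding each subinterval by its dominant scale, one recovers the trichotomy $\nu<n-1$, $\nu=n-1$, $\nu>n-1$ of \eqref{eq:stimabI} in exactly the same spirit as the $A+B$ splitting in Lemma~\ref{lem:singint}.

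Finally, the concluding bound $J_\nu\lesssim\bra{\rho+|x|}^{-\nu}$ for $\nu<n-1$ (respectively with the extra factor $\log(2\bra\rho+|x|)$ when $\nu=n-1$) follows by inspecting the three regimes, using $\bra{\rho+|x|}\simeq\max(\bra\rho,\bra x)$, $\bra{|x|-\rho}\le\bra\rho$, and $\log(2\bra\rho/\bra{|x|-\rho})\le\log(2\bra\rho)$. The principal obstacle I anticipate is the careful bookkeeping in the critical regime, in particular tracking whether $||x|-\rho|$ and $\sigma$ lie below or above $1$ (so that the corresponding brackets are $\simeq 1$ or $\simeq$ the quantity itself); this produces several subcases within each range of $\nu$, but no conceptual novelty beyond Lemma~\ref{lem:singint} is required.
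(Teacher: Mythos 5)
Your proposal is correct and follows essentially the same route as the paper: the same four-regime decomposition inherited from Lemma~\ref{lem:singint}, with the critical regime handled by passing to polar coordinates in $\theta_1$, substituting $\sigma=|x-\rho\theta|$, and invoking Heron's formula for the triangle $0,x,\rho\theta$. The only cosmetic difference is that the paper first factors out $\rho$ (working with $|x/\rho-\theta|$ and the unit triangle with sides $|x|/\rho,1,\sigma$, so the bracket argument becomes $\rho\sigma$), whereas you keep $\sigma=|x-\rho\theta|$ directly with the triangle of sides $|x|,\rho,\sigma$; these are related by the scaling $\sigma\mapsto\rho\sigma$ and lead to the same case analysis.
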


\begin{proof}%[of ...]
  The proof is similar to the proof of Lemma \ref{lem:singint};
  we sketch the main steps. Estimates \eqref{eq:stimab0} and
  \eqref{eq:stimac0} are obvious, thus we focus on \eqref{eq:stimabI}.
  Write $r=|x|$, so that we are in the region $1/2\le r/\rho\le2$;
  we shall consider in detail the case
  \begin{equation*}
    1\le \frac{r}{\rho}\le 2,
  \end{equation*}
  the remaining region being similar.
  Using the same coordinates as before, the integral is reduced to
  \begin{equation*}
    J_{\nu}(|x|,\rho)=
    |x|^{2-n}\int_{|x|-1}^{|x|+1}\bra{\rho \sigma}^{-\nu}
    A^{n-3}\sigma\ d \sigma
  \end{equation*}
  where $A$ is given by Heron's formula
  \begin{equation*}
    A(|x|,\sigma)^{2}=
    (|x|+\sigma+1)
    (|x|+\sigma-1)
    (|x|+1-\sigma)
    (\sigma+1-|x|).
  \end{equation*}
  We split the integral on the intervals $|x|\le\sigma\le|x|+1$ 
  and $|x|-1\le\sigma\le|x|$. The first piece gives 
  \begin{equation*}
    I_{1}\simeq 
     \bra{\rho}^{-\nu}
      \int_{|x|}^{|x|+1}(|x|+1-\sigma)^{\frac{n-3}{2}}d \sigma
  \end{equation*}
  and by the change of variable $\sigma\to \sigma(|x|+1)$
  we obtain
  \begin{equation*}
    I_{1}(|x|,\rho)\simeq \bra{\rho}^{-\nu}.
  \end{equation*}
  For the second integral on $|x|-1\le\sigma\le|x|$, noticing that
  \begin{equation*}
    1\le \frac{|x|+\sigma-1}{\sigma}\le2
  \end{equation*}
  we have
  \begin{equation*}
  \begin{split}
    I_{2}\simeq &
    \int_{|x|-1}^{|x|}
      \bra{\rho \sigma}^{-\nu} \sigma^{\frac{n-1}{2}}
      (\sigma+1-|x|)^{\frac{n-3}{2}}d \sigma
    \\
    = &
    (|x|-1)^{n-1}
    \int_{1}^{\frac{|x|}{|x|-1}}
    \bra{(r-\rho)\sigma}^{-\nu}\sigma^{\frac{n-1}{2}}
    (\sigma-1)^{\frac{n-3}{2}}d \sigma
  \end{split}
  \end{equation*}
  via the change of variables $\sigma\to \sigma(|x|-1)$
  which gives $\rho \sigma\to(r-\rho)\sigma$.
  The part of the integral bewteen 1 and 2 produces
  \begin{equation*}
    \simeq
    (|x|-1)^{n-1}\bra{r-\rho}^{-\nu}=
    \rho^{1-n}(r-\rho)^{n-1}\bra{r-\rho}^{-\nu}
  \end{equation*}
  while the remaining part between 2 and $|x|/(|x|-1)$ gives
  \begin{equation*}
  \begin{split}
    \simeq &
    (|x|-1)^{n-1}
    \int_{2}^{\frac{r}{r-\rho}}
    \bra{(r-\rho)\sigma}^{-\nu} 
    \sigma^{n-2}d \sigma
    \\
    = &
    \rho^{1-n}
    \int_{2(r-\rho)}^{r}\bra{\sigma}^{-\nu}\sigma^{n-2}d \sigma
    \\
    \simeq\ &
    \rho^{1-n}
    \int_{2(r-\rho)}^{r}
    \frac{\sigma^{n-2}}{1+\sigma^{\nu}}d \sigma
  \end{split}
  \end{equation*}
  which can be computed explicitly. Summing up we obtain
  \eqref{eq:stimabI}.
\end{proof}

We are ready for the main part of the proof.
By the isomorphism 
\begin{equation*}
  \mathbb{S}^{n-1}\simeq SO(n)/SO(n-1)
\end{equation*}
we can represent integrals on $\mathbb{S}^{n-1}$ in the form
\begin{equation*}
  \int_{\mathbb{S}^{n-1}}g(y)dS(y)= c_{n}
  \int_{SO(n)}g(Ae)dA,\qquad n\ge2
\end{equation*}
where $dA$ is the left Haar measure on
$SO(n)$, and $e\in\mathbb{S}^{n-1}$ is a fixed arbitrary
unit vector. Thus, via polar coordinates, 
a convolution integral can be written as follows
(apart from inessential constants depending only on the
space dimension $n$):
\begin{equation*}
\begin{split}
  F*\phi(x)=
  \int_{\mathbb{R}^{n}}F(x-y)\phi(y)dy
  &=
  \int_{0}^{\infty}
  \int_{\mathbb{S}^{n-1}}
     F(x-\rho \omega)\phi(\rho \omega)dS_{\omega}\rho^{n-1}d \rho
    \\
  &\simeq
  \int_{0}^{\infty}
  \int_{SO(n)}F(x-\rho Be)\phi(\rho Be)dB\rho^{n-1}d \rho
\end{split}
\end{equation*}
Hence the $L^{\q}$ norm of the convolution
on the sphere can be written as
\begin{equation*}
\begin{split}
  \|F*\phi(|x|\theta)\|_{L^{\q}_{\theta}(\mathbb{S}^{n-1})}
  &\simeq
  \|F*\phi(|x|Ae)\|_{L^{\q}_{A}(SO(n))}
    \\
  &\le 
  \int_{0}^{\infty}
  \left\|
    \int_{SO(n)}F(|x|Ae-\rho Be)\phi(\rho Be)dB
  \right\|_{L^{\q}_{A}(SO(n))}
  \rho^{n-1}d \rho
\end{split}
\end{equation*}
where $e$ is any fixed unit vector. By the change of variables
$B\to AB^{-1}$ in the inner integral
(and the invariance of the measure) 
this is equivalent to
\begin{equation*}
  =\int_{0}^{\infty}
  \left\|
    \int_{SO(n)}F(AB^{-1}(|x|Be-\rho e))\phi(\rho AB^{-1}e)dB
  \right\|_{L^{\q}_{A}(SO(n))}
  \rho^{n-1}d \rho
\end{equation*}
If $F$ satisfies
\begin{equation}\label{eq:rad}
  |F(x)|\le C f(|x|)
\end{equation}
for a radial function $f$, we can write
\begin{equation*}
  |F(AB^{-1}(|x|Be-\rho e))|\le 
  C f\left(\bigl||x|Be-\rho e\bigr|\right)
\end{equation*}
and we notice that the integral
\begin{equation*}
  \int_{SO(n)}f\left(\bigl||x|Be-\rho e\bigr|\right)
     |\phi(\rho AB^{-1}e)|dB=
  g*h(A)
\end{equation*}
is a convolution on $SO(n)$ of the functions
\begin{equation*}
  g(A)=f\left(\bigl||x|Ae-\rho e\bigr|\right),\qquad
  h(A)=|\phi(\rho Ae)|.
\end{equation*}
We can thus apply the Young's inequality on $SO(n)$
(see e.g.~Theorem 1.2.12 in 
\cite{Grafakos08-a}) and we obtain, for any
\begin{equation*}
  \q,\R,\p \in[1,+\infty]
  \quad\text{with}\quad
  1+\frac{1}{\q}=\frac1\R+\frac1\p,
\end{equation*}
the estimate
\begin{equation}\label{eq:firstest}
  \|F*\phi(|x|\theta)\|_{L^{\q}_{\theta}(\mathbb{S}^{n-1})}
  \lesssim 
  \int_{0}^{\infty}
  \|f(||x|e-\rho\theta|)\|_{L^{\R}_{\theta}(\mathbb{S}^{n-1})}
  \|\phi(\rho\theta)\|_{L^{\p}_{\theta}(\mathbb{S}^{n-1})}
  \rho^{n-1}d \rho
\end{equation}
where we switched back to the coordinates of $\mathbb{S}^{n-1}$.
Notice that the conditions on the indices imply in particular
\begin{equation*}
  \q\ge\p.
\end{equation*}
Specializing $f$ to the choice
\begin{equation*}
  f(|x|)=|x|^{-\gamma}
\end{equation*}
we get
\begin{equation*}%
  \|F*\phi(|x|\theta)\|_{L^{\q}_{\theta}}
  \lesssim 
  \int_{0}^{\infty}
  \rho^{-\gamma}
  \||\rho^{-1}|x|e-\theta|^{-\gamma}\|_{L^{\R}_{\theta}}
  \|\phi(\rho\theta)\|_{L^{\p}_{\theta}}
  \rho^{n-1}d \rho
\end{equation*}
which can be written in the form
\begin{equation*}
  =
  |x|^{n-\alpha-\frac np-\gamma}
  \int_{0}^{\infty}
  \left(
  \frac{|x|}{\rho}
  \right)^{\alpha+\frac np-n+\gamma}
  \||\rho^{-1}|x|e-\theta|^{-\gamma}\|_{L^{\R}_{\theta}}
  \rho^{\alpha+\frac np}
  \|\phi(\rho\theta)\|_{L^{\p}_{\theta}}
  \frac{d\rho}{\rho}
\end{equation*}
or equivalently, recalling \eqref{eq:condSW},
\begin{equation*}
  =
  |x|^{\beta-\frac nq}
  \int_{0}^{\infty}
  \left(
  \frac{|x|}{\rho}
  \right)^{-\beta+\frac nq}
  \||\rho^{-1}|x|e-\theta|^{-\gamma}\|_{L^{\R}_{\theta}}
  \rho^{\alpha+\frac np}
  \|\phi(\rho\theta)\|_{L^{\p}_{\theta}}
  \frac{d\rho}{\rho}
\end{equation*}
Following \cite{DenapoliDrelichmanDuran09-a},
we recognize that the
last integral is a convolution
in the multiplicative group $(\mathbb{R},\cdot)$ with the Haar measure
$d \rho/\rho$, which implies
\begin{equation*}
  |x|^{-\beta+\frac nq}
  \|F*\phi(|x|\theta)\|_{L^{\q}_{\theta}}
  \lesssim
  g_{1}*h_{1}(|x|),
\end{equation*}
with
\begin{equation*}
  g_{1}(\rho)=\rho^{-\beta+\frac nq}
  \||\rho e-\theta|^{-\gamma}\|_{L^{\R}_{\theta}},\qquad
  h_{1}(\rho)=
  \rho^{\alpha+\frac np}
  \|\phi(\rho\theta)\|_{L^{\p}_{\theta}}.
\end{equation*}
By the weak Young's inequality in the measure $d\rho/\rho$
(Theorem 1.4.24 in \cite{Grafakos08-a})
we obtain
\begin{equation*}
\begin{split}
  \||x|^{-\beta}F*\phi\|_{L^{q}L^{\q}}\equiv
  &\left\||x|^{-\beta+\frac nq}
     \|F*\phi(|x|\theta)\|_{L^{\q}_{\theta}}
  \right\|_{L^{q}(\rho^{-1}d\rho)}
    \\
  \lesssim &
  \|h_{1}\|_{L^{p}(\rho^{-1}d\rho)}
  \|g_{1}\|_{L^{r,\infty}(\rho^{-1}d\rho)}
\end{split}
\end{equation*}
that is to say
\begin{equation}\label{eq:almostfin}
  \||x|^{-\beta}F*\phi\|_{L^{q}L^{\q}}
  \lesssim
  \|\phi\|_{L^{p}L^{\p}}
  \left\|
    \rho^{-\beta+\frac nq}\||\rho e-\theta|^{-\gamma}\|
       _{L^{\R}_{\theta}}
  \right\|_{L^{r,\infty}(\rho^{-1}d\rho)}.
\end{equation}
provided
\begin{equation*}
  q,r,p\in(1,+\infty)\qquad
  1+\frac1q=\frac1r+\frac1p.
\end{equation*}
In particular this implies
\begin{equation}\label{eq:qp}
  q>p.
\end{equation}
In order to achieve the proof, it remains to check that the
last norm in \eqref{eq:almostfin} is finite.
Notice that, when $\R<\infty$,
\begin{equation*}
  \||\rho e-\theta|^{-\gamma}\|_{L^{\R}_{\theta}}=
  I_{\gamma \R}(\rho e)^{\frac{1}{\R}}
\end{equation*}
where $I_{\nu}$ was defined and estimated in Lemma \ref{lem:singint}.
On the other hand, when $\R=\infty$ one has directly
\begin{equation}\label{eq:rinf}
  \R=\infty \quad\implies\quad
  \||\rho e-\theta|^{-\gamma}\|_{L^{\R}_{\theta}}\simeq
  |\rho-1|^{-\gamma}.
\end{equation}

Using cutoffs, we split the $L^{r,\infty}$ norm in three regions
$0\le \rho\le 1/2$, $\rho\ge2$ and $1/2\le \rho\le2$.

In the region $0\le\rho\le1/2$, recalling 
\eqref{eq:stima0}-\eqref{stimaI} or \eqref{eq:rinf},
we have
\begin{equation*}
  I_{\gamma \R}(\rho e)^{\frac{1}{\R}}\simeq 1
  \quad\implies\quad
  \rho^{-\beta+\frac nq}I_{\gamma \R}(\rho e)^{\frac{1}{\R}}
  \in L^{1}(0,1/2; d\rho/\rho)
\end{equation*}
since by assumption $\beta<n/q$; thus the contribution of this
part to the $L^{r,\infty}(d\rho/\rho)$ norm is finite.

In the region $\rho\ge2$ we have
\begin{equation*}
  I_{\gamma \R}(\rho e)^{\frac{1}{\R}}\simeq 
    \rho^{-\gamma}
  \quad\implies\quad 
  \rho^{-\beta+\frac nq}I_{\gamma \R}(\rho e)^{\frac{1}{\R}} \simeq
  \rho^{-\beta-\gamma+\frac nq} 
  \in L^{1}(2,\infty; d\rho/\rho)
\end{equation*}
since the condition
\begin{equation*}
  -\beta-\gamma+\frac nq<0 \quad\iff \quad
  \alpha<\frac{n}{p'}
\end{equation*}
is satisfied by \eqref{eq:condDL}, and again the
contribution to the $L^{r,\infty}$ norm is finite.

For the third region $1/2\le \rho\le2$, by estimate
\eqref{stimaI}, we see that in the case
$\gamma\R\le n-1$ one has again, for some $\sigma\ge0$,
\begin{equation*}
  I_{\gamma \R}(\rho e)^{\frac{1}{\R}}\simeq 
    |\log||\rho|-1|^{\sigma}
  \quad\implies\quad 
  \rho^{-\beta+\frac nq}I_{\gamma \R}(\rho e)^{\frac{1}{\R}}
  \in L^{1}(1/2,2; d\rho/\rho)
\end{equation*}
On the other hand, in the case $\gamma\R>n-1$ (which includes
the choice $\R=\infty$), we see that
\begin{equation*}
  \rho^{-\beta+\frac nq}I_{\gamma \R}(\rho e)^{\frac{1}{\R}}\simeq
  |\rho-1|^{\frac{n-1}{\R}-\gamma}
  \in L^{r,\infty}(1/2,2; d\rho/\rho)\quad
  \iff \frac{n-1}{\R}-\gamma\ge-\frac1r.
\end{equation*}
Recalling the relation between $q,r,p$ (resp.~$\q,\R,\p$) the last
condition is equivalent to
\begin{equation*}
  -\gamma
  \ge 
  (n-1)
  \left(
    \frac1q-\frac1p-\frac1\q+\frac1\p
  \right)-\frac nq+\frac np-n
\end{equation*}
which is precisely the third of conditions \eqref{eq:condDL}.

The weak Young inequality can be used in \eqref{eq:almostfin}
only in the range $q,r,p\in(1,+\infty)$, which forces
\begin{equation*}
  1<p<q<\infty.
\end{equation*}
To cover the cases
\begin{equation*}
  1\le p<q\le\infty
\end{equation*}
we use instead the strong Young inequality: we can write
\begin{equation}\label{eq:almostfin2}
  \||x|^{-\beta}F*\phi\|_{L^{q}L^{\q}}
  \lesssim
  \|\phi\|_{L^{p}L^{\p}}
  \left\|
    \rho^{-\beta+\frac nq}\||\rho e-\theta|^{-\gamma}\|
       _{L^{\R}_{\theta}}
  \right\|_{L^{r}(\rho^{-1}d\rho)}
\end{equation}
for the full range 
$q,r,p\in[1,+\infty]$. The previous arguments
are still valid apart from the last step which must be replaced by
\begin{equation*}
  \rho^{-\beta+\frac nq}I_{\gamma \R}(\rho e)^{\frac{1}{\R}}\simeq
  |\rho-1|^{\frac{n-1}{\R}-\gamma}
  \in L^{r}(1/2,2; d\rho/\rho)\quad
  \iff \frac{n-1}{\R}-\gamma>-\frac1r
\end{equation*}
and this implies that the inequality in the last condition
\eqref{eq:condDL} must be strict. 

The case
\begin{equation*}
  1<p=q<\infty
\end{equation*}
has already been covered. Indeed, in this case
the scaling condition \eqref{eq:condDL} implies
\begin{equation*}
  \alpha+\beta+\gamma=n
  \quad\implies\quad
  \alpha+\beta>0
\end{equation*}
since $\gamma<n$. Thus when $\p=\q$ the last inequality in
\eqref{eq:condDL} is strict and we can apply the second part of
the proof; the cases $\p\le\q$ follow from the case $\p=\q$.

To complete the proof, it remains to consider the case (ii) where
we assume that the support of the Fourier transform
$\widehat{\phi}$ is contained in an annular region of size $R$.
By scaling invariance of the inequality, it is sufficient to
consider the case $R=1$. Now let $\psi(x)$ be such that
$\widehat{\psi}\in C^{\infty}_{c}$ and precisely
\begin{equation*}
  \widehat{\psi}(\xi)=1 \quad\text{for $c_{1}'\le|\xi|\le c_{2}'$},
  \qquad
  \widehat{\psi}(\xi)=0 \quad\text{for $|\xi|>2c_{1}'$ and 
       $|\xi|<\frac12 c_{2}'$},
\end{equation*}
for some constants $c'_{2}>c_{2}\ge c_{1}>c'_{1}>0$. This implies
\begin{equation*}
  \phi =\mathscr{F}^{-1}(\widehat{\psi}\widehat{\phi})
  =\psi*\phi
\end{equation*}
and we can write
\begin{equation*}
  T_{\gamma}\phi=|x|^{-\gamma}*\psi*\phi=
  (T_{\gamma}\psi)*\phi.
\end{equation*}
Since $T_{\gamma}\psi=c\mathscr{F}^{-1}
(|\xi|^{\gamma-n}\widehat{\psi}(\xi))$ is a Schwartz class function,
we arrive at the estimates
\begin{equation}\label{eq:allN}
  |T_{\gamma}\phi(x)|\le C_{\mu,\gamma}\bra{x}^{-\mu}*|\phi|\qquad
  \forall \mu\ge1.
\end{equation}
Here we can take $\mu$ arbitrarily large. Thus the proof of case
(ii) is concluded by applying the following Lemma:

\begin{lemma}\label{lem:xmu}
  Let $n\ge2$.
  Assume $1\le p\le q\le \infty$,
  $1\le \p\le \q\le \infty$ and $\alpha,\beta, \mu$ satisfy
  \begin{equation}\label{eq:condabmu}
    \beta<\frac nq,\qquad
    \alpha<\frac{n}{p'},\qquad
    \alpha+\beta\ge(n-1)
    \left(\frac1q-\frac1p+\frac1\p-\frac1\q\right),
  \end{equation}
  \begin{equation}\label{eq:condmu}
    \mu>
    -\alpha-\beta+n\left(1+\frac1q-\frac1p\right).
  \end{equation}
  Then the following estimate holds:
  \begin{equation}\label{eq:estmu}
    \||x|^{-\beta}\bra{x}^{-\mu}*\phi\|
       _{L^{q}_{|x|}L^{\q}_{\theta}}\lesssim
    \|\phi\|_{L^{p}_{|x|}L^{\p}_{\theta}}.
  \end{equation}
\end{lemma}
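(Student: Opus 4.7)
The proof parallels that of Theorem \ref{the:Our1Thm}, with Lemma \ref{lem:singint2} substituted for Lemma \ref{lem:singint} to account for the non-homogeneous kernel $\bra{x}^{-\mu}$. Scaling invariance is lost, but the extra decay required by \eqref{eq:condmu} compensates for this.

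\textbf{Step 1 (angular reduction).} Following verbatim the opening of the proof of Theorem \ref{the:Our1Thm}, parametrise $\mathbb{S}^{n-1}\simeq SO(n)/SO(n-1)$ and apply the strong Young inequality on $SO(n)$ with exponents satisfying $1+1/\q=1/\R+1/\p$ (consistent because $\p\le\q$). Writing $r=|x|$, $f(\rho)=\|\phi(\rho\cdot)\|_{L^{\p}_\theta}$ and
\begin{equation*}
K(r,\rho)=\|\bra{re-\rho\theta}^{-\mu}\|_{L^{\R}_\theta},
\end{equation*}
we obtain
\begin{equation*}
\|\bra{\cdot}^{-\mu}*\phi(r\,\cdot\,)\|_{L^{\q}_\theta}\lesssim \int_{0}^{\infty}K(r,\rho)f(\rho)\rho^{n-1}d\rho,
\end{equation*}
so it suffices to prove the scalar estimate
\begin{equation*}
\left\|r^{-\beta}\int_{0}^{\infty}K(r,\rho)f(\rho)\rho^{n-1}d\rho\right\|_{L^{q}(r^{n-1}dr)}\lesssim \|f\|_{L^{p}(\rho^{n-1}d\rho)}.
\end{equation*}

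\textbf{Step 2 (kernel estimates).} When $\R<\infty$ we have $K=J_{\mu\R}^{1/\R}$, where $J_\nu$ is controlled by Lemma \ref{lem:singint2}; when $\R=\infty$ one writes $K(r,\rho)=\sup_\theta\bra{re-\rho\theta}^{-\mu}\simeq \bra{|r-\rho|}^{-\mu}$ directly. Combining the three cases of Lemma \ref{lem:singint2}, we get $K(r,\rho)\lesssim \bra{r}^{-\mu}$ whenever $\rho\le r/2$, $K(r,\rho)\lesssim \bra{\rho}^{-\mu}$ whenever $\rho\ge 2r$, and in the diagonal strip $\{r,\rho\ge 1,\ r/2\le\rho\le 2r\}$ the singular bound
\begin{equation*}
K(r,\rho)\lesssim \rho^{(1-n)/\R}\bra{r-\rho}^{(n-1)/\R-\mu}\qquad (\mu\R>n-1),
\end{equation*}
with subcritical/logarithmic variants when $\mu\R\le n-1$.

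\textbf{Step 3 (three-region decomposition).} Split the $\rho$-integral accordingly. On $\{\rho\le r/2\}$ and $\{\rho\ge 2r\}$ the kernel has the ``fast'' decay $\bra{r+\rho}^{-\mu}$; pulling the weights inside and applying H\"older's inequality reduces each piece to an integral of the form $\int_0^\infty \bra{s}^{-\mu}s^{N}\tfrac{ds}{s}$ with $N$ computed from $\alpha,\beta,p,q,n$, and condition \eqref{eq:condmu} is precisely the integrability threshold $\mu>N$. On the diagonal strip, absorb the factor $\rho^{(1-n)/\R}$ and the weights $r^{-\beta},\rho^{(n-1)/p'}$ into the endpoint functions; what remains is convolution in the additive group $(\mathbb{R},d\rho)$ against $\bra{r-\rho}^{(n-1)/\R-\mu}$. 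The strong Young inequality (or the weak one together with the passage $p\le q$) then yields the bound, with the third inequality in \eqref{eq:condabmu} guaranteeing that $\bra{\cdot}^{(n-1)/\R-\mu}$ lies in the required $L^{r}$ (or $L^{r,\infty}$) space for $1+1/q=1/r+1/p$. The logarithmic borderline case $\mu\R=n-1$ is absorbed by shrinking $\mu$ slightly and exploiting the strict inequality in \eqref{eq:condmu}.

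\textbf{Main obstacle.} The bookkeeping in the diagonal strip is the delicate step: one must verify that with the precise exponents prescribed by \eqref{eq:condabmu}--\eqref{eq:condmu} the convolution kernel $\bra{r-\rho}^{(n-1)/\R-\mu}$ has the right summability, and that the passage from strong to weak Young (needed at the endpoints $p=1$ or $q=\infty$) is compatible with the non-strict inequality in \eqref{eq:condabmu}. Once this is done, all three regional contributions combine into the single inequality \eqref{eq:estmu}.
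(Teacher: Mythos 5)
Your outline (angular reduction via Young on $SO(n)$, kernel bounds from Lemma~\ref{lem:singint2}, off-diagonal/diagonal splitting) matches the paper's, but Step~3 on the diagonal strip has a concrete gap.

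On the strip $\{\sigma\simeq\rho\gtrsim 1\}$, after substituting the bound $J_{\mu\R}^{1/\R}\lesssim\rho^{(1-n)/\R}\bra{\sigma-\rho}^{(n-1)/\R-\mu}$ and collecting the powers of $\sigma\simeq\rho$, one is left with
\begin{equation*}
  Q_{5}(\sigma)\lesssim \sigma^{\,E}\int_{\sigma/2}^{2\sigma}\bra{\sigma-\rho}^{\frac{n-1}{\R}-\mu}P(\rho)\,d\rho,
  \qquad
  E:=-\alpha-\beta+(n-1)\Bigl(\tfrac1q-\tfrac1p+\tfrac1{\q}-\tfrac1{\p}\Bigr)\le 0,
\end{equation*}
the sign $E\le0$ being precisely the content of the third inequality in \eqref{eq:condabmu}. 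If you simply discard the prefactor $\sigma^{E}$ and apply Young with $1+1/q=1/r+1/p$, the kernel $\bra{\cdot}^{(n-1)/\R-\mu}$ lies in $L^{r}$ only when
\begin{equation*}
  \mu>\tfrac{n-1}{\R}+1+\tfrac1q-\tfrac1p
   =\Bigl[-\alpha-\beta+n\bigl(1+\tfrac1q-\tfrac1p\bigr)\Bigr]-E,
\end{equation*}
which is \emph{stronger} than \eqref{eq:condmu} by the amount $|E|$; whenever $E<0$ (which is generic under \eqref{eq:condabmu}, e.g.\ whenever $\p<\q$ or the inequality is strict), your argument fails. Moreover, the membership of the kernel in $L^r$ cannot follow from \eqref{eq:condabmu} alone as you claim, since that condition does not involve $\mu$ at all. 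The correct step is the one in the paper: first apply H\"older to $\|\sigma^{E}\cdot\|_{L^{q}}$ with an auxiliary exponent $q_0$ (so $1/q=1/q_{0}+1/q_{1}+1/p-1$), absorbing the surplus decay $\sigma^{E}$ into $L^{q_{0}}$, and only then apply Young in $L^{q_{1}}$ to the convolution; optimising over $q_{0}$ recovers exactly the threshold \eqref{eq:condmu}. A secondary, smaller issue: the weak Young fallback you mention requires $p<q$, but the lemma allows $p=q$; in fact no weak Young is needed here, since \eqref{eq:condmu} is a strict inequality and all the Young steps can be taken in strong form.
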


\begin{proof}%[of ...]
  Notice that, by \eqref{eq:condabmu},
  the right hand side in \eqref{eq:condmu} is always strictly
  positive and never larger than 
  $n-1$, thus it is sufficient to prove the lemma for $\mu$
  in the range
  \begin{equation*}
    0<\mu\le n.
  \end{equation*}
  By \eqref{eq:firstest} we have, for all $\p,\q,\R\in[1,+\infty]$ with
  $1+1/\q=1/\R+1/\p$,
  \begin{equation}\label{eq:quant}
    \|\bra{\cdot}^{-\mu}*|\phi|(|x|\theta)\|
       _{L^{\q}_{\theta}(\mathbb{S}^{n-1})}\lesssim
    \int_{0}^{\infty}
       J_{\mu\R}(|x|,\rho)^{\frac{1}{\R}}
    \|\phi(\rho \theta)\|
       _{L^{\p}_{\theta}(\mathbb{S}^{n-1})}\rho^{n-1} d\rho.
  \end{equation}
  Notice that when $\R=\infty$ we have
  \begin{equation*}
    \|\bra{|x|e-\rho \theta}^{-\mu}\|_{L^{\infty}_{\theta}}\lesssim
    \bra{|x|-\rho}^{-\mu}.
  \end{equation*}
  We write for brevity
  \begin{equation*}
    Q(|x|)\equiv
    |x|^{-\beta+\frac{n-1}{q}}
    \|\bra{\cdot}^{-\mu}*|\phi|(|x|\theta)\|
       _{L^{\q}_{\theta}},\qquad
    P(\rho)=\rho^{\alpha+\frac{n-1}{p}}
      \|\phi(\rho \theta)\|
         _{L^{\p}_{\theta}}
  \end{equation*}
  \begin{equation*}
    J(|x|,\rho)=J_{\mu\R}^{\frac{1}{\R}}(x,\rho)
    \quad\text{(resp. $\bra{|x|-\rho}^{-\mu}$ if $\R=\infty$).}
  \end{equation*}
  Thus \eqref{eq:quant} becomes
  \begin{equation}\label{eq:quantt}
    Q(\sigma)\lesssim
    \sigma^{-\beta+\frac{n-1}{q}}
    \int_{0}^{\infty}J(\sigma,\rho)
    \rho^{\frac{n-1}{p'}-\alpha}
    P(\rho)
    d\rho
  \end{equation}
  and the estimate to be proved \eqref{eq:estmu}
  can be written as
  \begin{equation}\label{eq:thesis}
    \|Q\|_{L^{q}(0,+\infty)}\lesssim
    \|P\|_{L^{p}(0,+\infty)}
  \end{equation}
  Recall that the integrals of the form $J(\sigma,\rho)$ have
  been estimated in Lemma \ref{lem:singint2}.
  
  We split $Q$ into the sum of several terms corresponding
  to different regions of $\rho,\sigma$.
  In the region $\sigma\le1$ we have 
  $J(\sigma,\rho)\lesssim \bra{\rho}^{-\mu}$
  so that
  \begin{equation}\label{eq:quant2}
    Q_{1}(\sigma)\lesssim
    \sigma^{-\beta+\frac{n-1}{q}}
    \int_{0}^{\infty}\bra{\rho}^{-\mu}\rho^{\frac{n-1}{p'}-\alpha}
    P(\rho)
    d \rho
  \end{equation}
  Thus we see that in this region \eqref{eq:thesis} follows simply from
  H\"older's inequality and the fact that $\alpha<n/p'$
  and $\beta<n/q$. Similarly, it is easy to handle the part of the
  integral with $\rho\le1$ since we have then
  $J(\sigma,\rho)\lesssim \bra{\sigma}^{-\mu}$. Thus in the
  following we can restrict to $\sigma \gtrsim 1,\rho \gtrsim 1$.

  When $1 \lesssim \sigma \le \rho/2$ we have again 
  $J(\sigma,\rho)\lesssim \bra{\rho}^{-\mu}$ 
  and \eqref{eq:quantt} becomes
  \begin{equation}\label{eq:quant3}
    Q_{2}(\sigma)\lesssim
    \sigma^{-\beta+\frac{n-1}{q}}
    \int_{\sigma}^{\infty}\bra{\rho}^{-\mu}\rho^{\frac{n-1}{p'}-\alpha}
    P(\rho)
    d \rho
  \end{equation}
  If we assume
  \begin{equation}\label{eq:conda}
    \mu>\frac{n}{p'}-\alpha
  \end{equation}
  we can apply H\"older's inequality and we get
  \begin{equation*}
    Q_{3}(\sigma)\lesssim
    \sigma^{-\beta+\frac{n-1}{q}}
    \sigma^{\frac{n}{p'}-\mu-\alpha}\|P\|_{L^{p}}.
  \end{equation*}
  Now the right hand side is in $L^{q}(\sigma\ge1)$ provided
  \begin{equation}\label{eq:condb}
    \mu>\frac{n}{p'}-\alpha+\frac nq-\beta \equiv
    -\alpha-\beta+n\left(1+\frac1q-\frac1p\right)
  \end{equation}
  and we see that \eqref{eq:condb} implies \eqref{eq:conda}
  since $\beta<n/q$ by assumption.

  When $1 \lesssim \rho \le \sigma/2$ we have 
  $J(\sigma,\rho)\lesssim \bra{\sigma}^{-\mu}$ 
  and \eqref{eq:quantt}
  becomes
  \begin{equation}\label{eq:quant3}
    Q_{4}(\sigma)\lesssim
    \sigma^{-\beta+\frac{n-1}{q}}\sigma^{-\mu}
    \int_{0}^{\sigma}\rho^{\frac{n-1}{p'}-\alpha}
    P(\rho)
    d \rho
  \end{equation}
  and by H\"older's inequality we have as before
  \begin{equation*}
    \lesssim
    \sigma^{-\beta+\frac{n-1}{q}}
    \sigma^{\frac{n}{p'}-\mu-\alpha}
    \|P\|_{L^{p}}
  \end{equation*}
  so that \eqref{eq:condb} is again sufficient to obtain 
  \eqref{eq:thesis}.

  Finally, let $\sigma \gtrsim 1$, $\rho \gtrsim 1$ and
  $2^{-1}\sigma\le \rho\le 2 \sigma$.
  In this region we must treat differently the values of
  $\mu\R$ larger or smaller than $n-1$, and the case
  $\R=\infty$ is considered at the end.
  Assume that $n-1<\mu\R\le n$; then
  $J(\sigma,\rho)\lesssim \bra{\rho}^{1-n}
  \bra{\sigma-\rho}^{\frac{n-1}{\R}-\mu}$,
  and using the relations
  \begin{equation*}
    \sigma \simeq \rho,\qquad
    \frac1\R=1+\frac1\p-\frac1\q
  \end{equation*}
  we see that \eqref{eq:quantt} reduces to
  \begin{equation}\label{eq:quant4}
    Q_{5}(\sigma)\lesssim
    \sigma^{-\alpha-\beta+(n-1)
      \left(
        \frac1q-\frac1p+\frac1\p-\frac1\q
      \right)}
    \int_{\sigma/2}^{2\sigma}
    \bra{\sigma-\rho}^{\frac{n-1}{\R}-\mu}
    P(\rho)
    d \rho.
  \end{equation}
  The last integral is (bounded by)
  a convolution of $P(\rho)$ with the function
  $\bra{\rho}^{\frac{n-1}{\R}-\mu}$.
  % \begin{equation*}
  %   \one{[-\sigma,\sigma]}(\rho)\cdot\bra{\rho}^{\frac{n-1}{\R}-\mu}.
  % \end{equation*}
  In order to estimate the $L^{q}(\sigma\ge1)$ norm of $Q_{5}$,
  we use first H\"older's then Young's inequality:
  \begin{equation*}
    \|Q_{5}\|_{L^{q}}\lesssim
    \|\bra{\sigma}^{-\epsilon}\|_{L^{q_{0}}}
    \|\bra{\rho}^{\frac{n-1}{\R}-\mu}\|_{L^{q_{1}}}
    \|P\|_{L^{p}}
  \end{equation*}
  where
  \begin{equation*}
    \epsilon=-\alpha-\beta+(n-1)
      \left(
        \frac1q-\frac1p+\frac1\p-\frac1\q
      \right),\qquad
    \frac1q=\frac{1}{q_{0}}+\frac{1}{q_{1}}+\frac1p-1.
  \end{equation*}
  By assumption we have $\epsilon\ge0$.
  When $\epsilon>0$,
  in order for the norms to be finite we need
  \begin{equation*}
    \epsilon q_{0}> 1,\qquad
    \frac{n-1}{\R}-\mu<-\frac{1}{q_{1}}
  \end{equation*}
  which can be rewritten
  \begin{equation*}
    (n-1)\left(
      1+\frac1\q-\frac1\p
    \right)-\mu +1+\frac1q-\frac1p
    < \frac1{q_{0}}
    <\epsilon
  \end{equation*}
  and we see that we can find a suitable $q_{0}$ provided
  the first side is strictly smaller than the last side; this
  condition is precisely equivalent to \eqref{eq:condb} again
  (recall also that $n-1<\mu\le n$).
  The argument works also in the case $\epsilon=0$ by choosing
  $q_{0}=\infty$.

  If on the other hand $0<\mu<n-1$, we have
  $J_{\mu\R}^{\frac1\R}\lesssim \bra{\rho}^{-\mu}$ also
  in this region, so that
  \begin{equation*}
    Q_{5}(\sigma)\lesssim
    \sigma^{-\beta+\frac{n-1}{q}}
    \sigma^{\frac{n-1}{p'}-\alpha-\mu}
    \int_{\sigma/2}^{\sigma}
    P(\rho)
    d \rho
  \end{equation*}
  by $\sigma \simeq \rho$. H\"older's inequality gives
  \begin{equation*}
    Q_{5}(\sigma)\lesssim
    \sigma^{-\beta+\frac{n-1}{q}}
    \sigma^{\frac{n-1}{p'}-\alpha-\mu}
    \sigma^{\frac1{p'}}\|P\|_{L^{p}}
  \end{equation*}
  which leads to exactly the same computations as above and in the
  end to \eqref{eq:condb}. The case $\mu=n-1$ introduces a
  logarithmic term which does not change the integrability properties
  used here.  
  
  It remains the last region when $\R=\infty$ so that
  $J(\sigma,\rho)=\bra{\sigma-\rho}^{-\mu}$
  and $1/\p-1/\q=1$. Then
  \begin{equation*}
    Q_{5}(\sigma)\lesssim 
    \sigma^{-\beta+\frac{n-1}{q}}
    \int_{\sigma/2}^{2 \sigma}\bra{\sigma-\rho}^{-\mu}
    \rho^{\frac{n-1}{p'}-\alpha}
    P(\rho)d \rho
  \end{equation*}
  which is identical with \eqref{eq:quant4} with $\R=\infty$,
  thus the same computations apply and the proof is concluded.
\end{proof}

\section{Proof of Theorem \ref{the:Our2Thm}}\label{sec:proof_of2} 

The Caffarelli-Kohn-Niremberg inequality is a simple corollary
of Theorem \eqref{the:Our1Thm}. We begin by taking
$0<a\le1$, and indices $r,\R,s,\s,q,\q\in[1,+\infty]$ such that
\begin{equation}\label{eq:rsq}
  \frac1r=\frac as+\frac{1-a}q,\qquad
  \frac1\R=\frac a\s+\frac{1-a}\q.
\end{equation}
Then by two applications of H\"older's inequality we obtain
the interpolation inequality
\begin{equation}\label{eq:holder}
\begin{split}
  \||x|^{-\gamma}u\|_{L^{r}L^{\R}}
  =&
  \|(|x|^{-\delta}u)^{a}(|x|^{-\beta}u)^{1-a}\|_{L^{r}L^{\R}}
    \\
  \le&
  \|(|x|^{-\delta}u)^{a}\|_{L^{s/a}L^{\s/a}}
  \|(|x|^{-\beta}u)^{1-a}\|_{L^{q/(1-a)}L^{\q/(1-a)}}
    \\
  =&
  \||x|^{-\delta}u\|^{a}_{L^{s}L^{\s}}
  \||x|^{-\beta}u\|^{1-a}_{L^{q}L^{\q}}
\end{split}
\end{equation}
provided the exponents $\gamma,\delta,\beta$ are related by
\begin{equation}\label{eq:expon}
  \gamma=a \delta+(1-a)\beta.
\end{equation}
Now the main step of the proof. By Theorem \eqref{the:Our1Thm}
we know that
\begin{equation*}
  \||x|^{-\delta}T_{\lambda}u\|_{L^{s}L^{\s}}\lesssim
  \||x|^{-\alpha}u\|_{L^{p}L^{\p}}
\end{equation*}
under suitable conditions on the indices. 
Now using the well known estimate
\begin{equation}\label{eq:fractest}
  |u(x)|\le C_{\lambda,n}
    T_{\lambda}\left(
      \left|
        |D|^{n-\lambda}u
      \right|
    \right)
\end{equation}
the previous inequality can be equivalently written
\begin{equation*}%\label{eq:embedd}
  \||x|^{-\delta}u\|_{L^{s}L^{\s}}\lesssim
  \||x|^{-\alpha}|D|^{\sigma}u\|_{L^{p}L^{\p}},\qquad
  \sigma=n-\lambda
\end{equation*}
which together with \eqref{eq:holder} gives
\begin{equation}\label{eq:final}
  \||x|^{-\gamma}u\|_{L^{r}L^{\R}}\lesssim
  \||x|^{-\alpha}|D|^{\sigma}u\|_{L^{p}L^{\p}}^{a}
  \||x|^{-\beta}u\|^{1-a}_{L^{q}L^{\q}}.
\end{equation}
The conditions on the indices are those given by \eqref{eq:rsq},
\eqref{eq:expon}, plus those listed in the statement of
Theorem \eqref{the:Our1Thm} (notice that we are using $-\alpha$
instead of $\alpha$). The complete list is the following:
\begin{equation}\label{eq:tot1}
  r,s,q,\R,\s,\q\in[1,+\infty],\qquad 
  a<0\le1,\qquad
  0<\sigma<n,
\end{equation}
\begin{equation}\label{eq:tot2}
  \frac1r=\frac as+\frac{1-a}q,\qquad
  \frac1\R=\frac a\s+\frac{1-a}\q.
\end{equation}
\begin{equation}\label{eq:tot3}
  1<s\le p<\infty,\qquad
  1\le\s\le\p\le \infty,
\end{equation}
\begin{equation}\label{eq:tot4}
  \gamma<\frac nr,\qquad
  \beta<\frac nq,\qquad
  -\alpha<\frac n{p'},\qquad
  \delta<\frac ns,
\end{equation}
\begin{equation}\label{eq:tot5}
  \gamma=a \delta+(1-a)\beta,
\end{equation}
\begin{equation}\label{eq:tot6}
  -\alpha+\delta+n-\sigma=n+\frac ns-\frac np,
\end{equation}
\begin{equation}\label{eq:tot7}
  -\alpha+\delta\ge
  (n-1)\left(\frac1s-\frac1p+\frac1\p-\frac1\s\right).
\end{equation}
Recall also that, when the last inequality \eqref{eq:tot7}
is strict, we can allow the full range
\begin{equation*}
  1\le s\le p\le \infty.
\end{equation*}
Our final task is to rewrite this set of conditions in a
compact form, eliminating the redundant parameters
$\delta,s,\s$. Define the two quantities
\begin{equation*}
  \Delta=a \sigma+n
    \left(
      \frac1r-\frac{1-a}{q}-\frac ap
    \right),\qquad
  \widetilde{\Delta}=a \sigma+n
    \left(
      \frac1\R-\frac{1-a}{\q}-\frac a\p
    \right).
\end{equation*}
Then \eqref{eq:tot2} are equivalent to
\begin{equation}\label{eq:tot2b}
  \Delta=a \left(\sigma+\frac ns-\frac np\right),\qquad
  \widetilde{\Delta}=a \left(\sigma+\frac n\s-\frac n\p\right)
\end{equation}
while \eqref{eq:tot6} is equivalent to
\begin{equation}\label{eq:tot6b}
  \delta=\alpha+\frac{\Delta}{a}
\end{equation}
and we can use \eqref{eq:tot2b}, \eqref{eq:tot6b} to replace
$\delta,s,\s$ in the remaining relations. Condition \eqref{eq:tot5}
becomes
\begin{equation}\label{eq:tot5b}
  \Delta=\gamma-a \alpha-(1-a)\beta,
\end{equation}
which is precisely the scaling condition,
while \eqref{eq:tot7} becomes
\begin{equation}\label{eq:tot7b}
  \Delta+(n-1)\widetilde{\Delta}\ge0.
\end{equation}
The last inequality in \eqref{eq:tot4}, $\delta<n/s$, can be written
\begin{equation*}%
  \alpha<\frac np-\sigma
\end{equation*}
so that \eqref{eq:tot4} is replaced by
\begin{equation}\label{eq:tot4b}
  \gamma<\frac nr,\qquad
  \beta<\frac nq,\qquad
  \frac np-n<\alpha<\frac np-\sigma.
\end{equation}
Finally, conditions \eqref{eq:tot3} translate to
\begin{equation}\label{eq:tot3b}
  1<p,\qquad
  a\left(\sigma-\frac np\right)<\Delta\le a \sigma, \qquad 
  a\left(\sigma-\frac n\p\right)\le \widetilde{\Delta}\le a \sigma.
\end{equation}
When the inequality in \eqref{eq:tot7b} is strict, the last
condition can be relaxed to
\begin{equation}\label{eq:tot3b2}
  1\le p,\qquad
  a\left(\sigma-\frac np\right)\le\Delta\le a \sigma, \qquad 
  a\left(\sigma-\frac n\p\right)\le \widetilde{\Delta}\le a \sigma.
\end{equation}

We pass now to the proof of Corollary \ref{cor:integers}.
Assume now $\sigma$ is integer, and the inequality
\begin{equation*}%\label{eq:CKNnostra}
  \||x|^{-\gamma}u\|_{L^{r}L^{\R}}\le C
  \||x|^{-\alpha}|D|^{\sigma} u\|^{a}_{L^{p}L^{\p}}
  \||x|^{-\beta}u\|^{1-a}_{L^{q}L^{\q}}
\end{equation*}
is true for a certain choice of the parameters as in the theorem,
so that in particular
\begin{equation*}
  \alpha< \frac np-\sigma<\frac np.
\end{equation*}
Then we shall prove that also the following inequalities are true
\begin{equation}\label{eq:goalk}
  \||x|^{k-\gamma}u\|_{L^{r}L^{\R}}\le C
  \||x|^{k-\alpha}D^{\sigma} u\|^{a}_{L^{p}L^{\p}}
  \||x|^{k-\beta}u\|^{1-a}_{L^{q}L^{\q}}
\end{equation}
for all integers $k\ge0$, where we are using the
shorthand notation
\begin{equation*}
  \||x|^{k-\alpha}D^{\sigma} u\|_{L^{p}L^{\p}}=
  \sum_{|\nu|=\sigma}
  \||x|^{k-\alpha}D^{\nu} u\|_{L^{p}L^{\p}},\qquad
  (\nu=(\nu_{1},\dots,\nu_{n})\in \mathbb{N}^{n}).
\end{equation*}
This in particular implies that the condition on $\alpha$ from
below can be dropped when $\sigma$ is integer.

When $k=0$, \eqref{eq:goalk} is obtained just by
replacing $|D|^{\sigma}$ with $D^{\sigma}$ in the 
original inequality. 
The proof of this estimate is identical to the previous one;
the only modification is to use, instead of \eqref{eq:fractest},
the stronger pointwise bound
\begin{equation}\label{eq:fractest2}
  |u(x)|\le C_{\lambda,n}
    T_{\lambda}\left(
      \left|
        D^{n-\lambda}u
      \right|
    \right)
\end{equation}
which is valid for all $\lambda=1,\dots,n-1$.

Now if we apply \eqref{eq:goalk} (with $k=0$) to a function of the
form $|x|^{k}u$ for some $k\ge1$, we obtain
\begin{equation*}
  \||x|^{k-\gamma}u\|_{L^{r}L^{\R}}\le C
  \||x|^{-\alpha}D^{\sigma}(|x|^{k}u)\|^{a}_{L^{p}L^{\p}}
  \||x|^{k-\beta}u\|^{1-a}_{L^{q}L^{\q}}
\end{equation*}
and to conclude the proof we see that it is sufficient to prove the
inequality
\begin{equation}\label{eq:interm}
  \||x|^{-\alpha}D^{\sigma}(|x|^{k}u)\|_{L^{p}L^{\p}}\lesssim
  \||x|^{k-\alpha}D^{\sigma} u\|_{L^{p}L^{\p}}
\end{equation}
for all $\alpha<n/p$, $1\le p,\p<\infty$,
and integers $\sigma=1,\dots,n-1$, $k\ge1$. Notice indeed that all
the conditions on the parameters 
(apart from $\alpha>-n+n/p$)
are unchanged if we decrease
$\gamma,\alpha,\beta$ by the same quantity.

By induction on $k$ (and writing $\delta=-\alpha$),
we are reduced to prove that
for all $p,\p\in[1,\infty)$ and $1\le\sigma\le n-1$
\begin{equation}\label{eq:interm2}
  \||x|^{\delta}D^{\sigma}(|x|u)\|_{L^{p}L^{\p}}\lesssim
  \||x|^{1+\delta}D^{\sigma} u\|_{L^{p}L^{\p}},\qquad
  \delta>\sigma-\frac np.
\end{equation}
Using Leibnitz' rule we reduce further to
\begin{equation}\label{eq:interm3}
  \||x|^{1+\delta-\ell}u\|_{L^{p}L^{\p}}\lesssim
  \||x|^{1+\delta}D^{\ell} u\|_{L^{p}L^{\p}},\qquad
  \delta>\ell-\frac np
\end{equation}
for $\ell=1,\dots,n-1$, and by induction on $\ell$
this is implied by
\begin{equation}\label{eq:interm4}
  \||x|^{\delta}u\|_{L^{p}L^{\p}}\lesssim
  \||x|^{1+\delta}\nabla u\|_{L^{p}L^{\p}},\qquad
  \delta>1-\frac np.
\end{equation}
In order to prove \eqref{eq:interm4}, consider first
the radial case.
When $u=\phi(|x|)$ is a radial (smooth compactly supported)
function, we have
\begin{equation*}
  \||x|^{\delta}u\|_{L^{p}L^{\p}}^{p}\simeq
  \int_{0}^{\infty}\rho^{\delta p+n-1}|\phi(\rho)|^{p}d\rho.
\end{equation*}
Integrating by parts we get
\begin{equation*}
\begin{split}
  =&-\frac{p}{\delta p+n}\int_{0}^{\infty}
    \rho^{\delta p+n}|\phi|^{p-1}|\phi(\rho)|'d\rho
    \\
  \lesssim &
  \int_{0}^{\infty}(\rho^{\delta p+n-1}|\phi|^{p})^{\frac{p-1}{p}}
  (\rho^{\delta p+p+n-1}|\phi'|^{p})^{\frac1p}d\rho
    \\
  \simeq &
  \||x|^{\delta}u\|^{\frac{p-1}{p}}_{L^{p}L^{\p}}
  \||x|^{1+\delta}\nabla u\|_{L^{p}L^{\p}}
\end{split}
\end{equation*}
which implies \eqref{eq:interm4} in the radial case. If $u$ is not
radial, define
\begin{equation*}
  \phi(\rho)=\|u(\rho \theta)\|_{L^{\p}_{\theta}(\mathbb{S}^{n-1})}
  =
  \left(
    \int_{\mathbb{S}^{n-1}}
    |u(\rho \theta)|^{\p}dS_{\theta}
  \right)^{\frac1\p}
\end{equation*}
so that
\begin{equation*}
  \||x|^{\delta}u\|_{L^{p}L^{\p}}\simeq
  \left(\int_{0}^{\infty}\rho^{\delta p+n-1}|\phi(\rho)|^{p}d\rho
  \right)
  ^{\frac1p}.
\end{equation*}
The proof in the radial case implies
\begin{equation*}
  \||x|^{\delta}u\|_{L^{p}L^{\p}}\le
  \||x|^{\delta+1}\phi'(|x|)\|_{L^{p}};
\end{equation*}
moreover we have
\begin{equation*}
\begin{split}
  |\phi'(\rho)|\lesssim &\ 
  \phi^{1-\p}
  \int_{\mathbb{S}^{n-1}}
  |u(\rho \theta)|^{\p-1}|\theta \cdot \nabla u|\ 
  dS_{\theta}
    \\
  \le &\ 
  \phi^{1-\p}
  \left(\int_{\mathbb{S}}|u|^{\p}\right)^{\frac{\p-1}\p}
  \left(\int_{\mathbb{S}}|\nabla u|^{\p}\right)^{\frac1\p}
  =\|\nabla u(\rho \theta)\|_{L^{\p}_{\theta}(\mathbb{S}^{n-1})}
\end{split}
\end{equation*}
and in conclusion we obtain
\begin{equation*}
  \||x|^{\delta}u\|_{L^{p}L^{\p}}\le
  \||x|^{\delta+1}\nabla u\|_{L^{p}L^{\p}}
\end{equation*}
as claimed.


\begin{thebibliography}{10}

\bibitem{CaffarelliKohnNirenberg84-a}
Luis~A. Caffarelli, Robert Kohn, and Louis Nirenberg.
\newblock First order interpolation inequalities with weights.
\newblock {\em Compositio Math.}, 53(3):259--275, 1984.

\bibitem{ChoOzawa09-a}
Yonggeun Cho and Tohru Ozawa.
\newblock Sobolev inequalities with symmetry.
\newblock {\em Commun. Contemp. Math.}, 11(3):355--365, 2009.

\bibitem{DanconaCacciafesta11-a}
Piero D'Ancona and Federico Cacciafesta.
\newblock Endpoint estimates and global existence for the nonlinear dirac
  equation with potential.
\newblock (preprint 2011).

\bibitem{DenapoliDrelichmanDuran09-a}
Pablo~L. De~N{\'a}poli, Irene Drelichman, and Ricardo~G. Dur{\'a}n.
\newblock Radial solutions for {H}amiltonian elliptic systems with weights.
\newblock {\em Adv. Nonlinear Stud.}, 9(3):579--593, 2009.

\bibitem{DenapoliDrelichmanDuran11-a}
Pablo~L. De~N{\'a}poli, Irene Drelichman, and Ricardo~G. Dur{\'a}n.
\newblock On weighted inequalities for fractional integrals of radial
  functions.
\newblock {\em Illinois J. Math.}, 2011.

\bibitem{FangWang08-a}
Daoyuan Fang and Chengbo Wang.
\newblock Weighted {S}trichartz estimates with angular regularity and their
  applications.
\newblock 2008.

\bibitem{GinibreVelo95-b}
Jean Ginibre and Giorgio Velo.
\newblock Generalized {S}trichartz inequalities for the wave equation.
\newblock {\em J. Funct. Anal.}, 133(1):50--68, 1995.

\bibitem{Grafakos08-a}
Loukas Grafakos.
\newblock {\em Classical {F}ourier analysis}, volume 249 of {\em Graduate Texts
  in Mathematics}.
\newblock Springer, New York, second edition, 2008.

\bibitem{HidanoKurokawa08-a}
Kunio Hidano and Yuki Kurokawa.
\newblock Weighted {HLS} inequalities for radial functions and {S}trichartz
  estimates for wave and {S}chr{\"o}dinger equations.
\newblock {\em Illinois J. Math.}, 52(2):365--388, 2008.

\bibitem{JiangWangYu10-a}
Chengbo~Wang Jin-Cheng~Jiang and Xin Yu.
\newblock Generalized and weighted strichartz estimates.
\newblock 2010.

\bibitem{KeelTao98-a}
Markus Keel and Terence Tao.
\newblock Endpoint {S}trichartz estimates.
\newblock {\em Amer. J. Math.}, 120(5):955--980, 1998.

\bibitem{MachiharaNakamuraNakanishi05-a}
Shuji Machihara, Makoto Nakamura, Kenji Nakanishi, and Tohru Ozawa.
\newblock Endpoint {S}trichartz estimates and global solutions for the
  nonlinear {D}irac equation.
\newblock {\em J. Funct. Anal.}, 219(1):1--20, 2005.

\bibitem{Ni82-a}
Wei~Ming Ni.
\newblock A nonlinear {D}irichlet problem on the unit ball and its
  applications.
\newblock {\em Indiana Univ. Math. J.}, 31(6):801--807, 1982.

\bibitem{SawyerWheeden92-a}
Eric~T. Sawyer and Richard~L. Wheeden.
\newblock Weighted inequalities for fractional integrals on {E}uclidean and
  homogeneous spaces.
\newblock {\em Amer. J. Math.}, 114(4):813--874, 1992.

\bibitem{SickelSkrzypczak00-a}
Winfried Sickel and Leszek Skrzypczak.
\newblock Radial subspaces of {B}esov and {L}izorkin-{T}riebel classes:
  extended {S}trauss lemma and compactness of embeddings.
\newblock {\em J. Fourier Anal. Appl.}, 6(6):639--662, 2000.

\bibitem{Stein93-a}
Elias~M. Stein.
\newblock {\em Harmonic analysis: real-variable methods, orthogonality, and
  oscillatory integrals}, volume~43 of {\em Princeton Mathematical Series}.
\newblock Princeton University Press, Princeton, NJ, 1993.
\newblock With the assistance of Timothy S. Murphy, Monographs in Harmonic
  Analysis, III.

\bibitem{SteinWeiss58-b}
Elias~M. Stein and Guido Weiss.
\newblock Fractional integrals on {$n$}-dimensional {E}uclidean space.
\newblock {\em J. Math. Mech.}, 7:503--514, 1958.

\bibitem{Sterbenz05-a}
Jacob Sterbenz.
\newblock Angular regularity and {S}trichartz estimates for the wave equation.
\newblock {\em Int. Math. Res. Not.}, (4):187--231, 2005.
\newblock With an appendix by Igor Rodnianski.

\bibitem{Strauss77-a}
Walter~A. Strauss.
\newblock Existence of solitary waves in higher dimensions.
\newblock {\em Comm. Math. Phys.}, 55(2):149--162, 1977.

\bibitem{Vilela01-a}
Maria~Cruz Vilela.
\newblock Regularity of solutions to the free {S}chr{\"o}dinger equation with
  radial initial data.
\newblock {\em Illinois J. Math.}, 45(2):361--370, 2001.

\end{thebibliography}
\end{document}